\theoremstyle{plain}
\newtheorem{theorem}{Theorem}
\newtheorem{lemma}[theorem]{Lemma}
\theoremstyle{definition} 
\newtheorem{definition}[theorem]{Definition}
\newtheorem{remark}[theorem]{Remark}
\newcommand{\dvn}{\nabla_v}
\newcommand{\dvnstar}{\nabla_{v_*}}
\newcommand{\eps}{\varepsilon}
\newcommand{\half}{\frac{1}{2}}
\newcommand{\Qop}{\mathcal{Q}}
\newcommand{\vstar}{v_*}
\newcommand{\rd}{\mathrm{d}}
\newcommand{\RR}{\mathbb{R}}
\newcommand{\equilibrium}{\mathcal{M}}
\newcommand{\vecx}{v}
\newcommand{\vecy}{w}
\newcommand{\Aone}{A_{11}}
\newcommand{\aone}{a_{11}}
\newcommand{\aot}{a_{12}}
\newcommand{\aott}{a_{13}}
\newcommand{\attt}{a_{23}}
\newcommand{\atwo}{a_{22}}
\newcommand{\athree}{a_{33}}
\newcommand{\Dtwo}{D_{w_2}}
\newcommand{\Dthree}{D_{w_3}}
\numberwithin{equation}{section}
\title{A particle method for the homogeneous Landau equation}
\author{Jose A. Carrillo\thanks{Mathematical Institute, University of Oxford, Oxford OX2 6GG, UK (carrillo@maths.ox.ac.uk).},
	Jingwei Hu\thanks{Department of Mathematics, Purdue University, West Lafayette, IN 47907, USA (jingweihu@purdue.edu).},
	Li Wang\thanks{School of Mathematics, University of Minnesota, Twin Cities, MN 55455, USA (wang8818@umn.edu).},
	Jeremy Wu\thanks{Department of Mathematics, Imperial College London, SW7 2AZ London, UK (jeremy.wu13@imperial.ac.uk).}
}
\begin{document}
	\maketitle
	
	\begin{abstract}
		We propose a novel deterministic particle method to numerically approximate the Landau equation for plasmas. Based on a new variational formulation in terms of gradient flows of the Landau equation, we regularize the collision operator to make sense of the particle solutions. These particle solutions solve a large coupled ODE system that retains all the important properties of the Landau operator, namely the conservation of mass, momentum and energy, and the decay of entropy. We illustrate our new method by showing its performance in several test cases including the physically relevant case of the Coulomb interaction. The comparison to the exact solution and the spectral method is strikingly good maintaining 2nd order accuracy. Moreover, an efficient implementation of the method via the treecode is explored. This gives a proof of concept for the practical use of our method when coupled with the classical PIC method for the Vlasov equation.
	\end{abstract}
	
	{\small 
		{\bf Key words.} 
		Landau equation for plasmas, deterministic particle methods, gradient flows, treecode.
		
		{\bf AMS subject classification.} 
		65M75,   	
		82B40,   	
		82D10.   	
	}

	\section{Introduction}
	
	The Landau equation is one of the fundamental kinetic equations describing the evolution of the distribution of charged particles in a collisional plasma \cite{Landau} where grazing collisions are predominant~\cite{DLD92,Vi98}. It is considered one of the most important equations in kinetic theory together with the Boltzmann equation, and it is of renewed interest in computational plasma physics due to the important applications related to fusion reactors and the ITER project.
	The Landau equation governs the evolution of the charged particles mass distribution function $f(t,x,v)$ in phase space $(x,v)\in \Omega\times\RR^d$ and is given by
	\begin{equation}  \label{fulllandau}
	\partial_t f  + v \cdot \nabla_x f +E\cdot \nabla_v f= \Qop(f,f) := \dvn \cdot \left\{ \int_{\RR^d} A(v-\vstar) \left( f(\vstar) \dvn f(v) - f(v) \dvnstar f(\vstar) \right) \rd \vstar\right\}\,,
	\end{equation}
	where $E$ is the acceleration due to external or self-consistent forces, the collision kernel takes the form $A(z) = |z|^{\gamma} \left(|z|^2 I_d - z \otimes z \right)= |z|^{\gamma+2} \Pi (z)$ with $I_d$ being the identity matrix, $\Pi(z)$ the projection matrix into $\{z\}^\perp$, $-d-1\leq \gamma \leq 1$, and $d\geq 2$. The most interesting case corresponds to $d=3$ with $\gamma = -3$ associated with the physical interaction in plasmas. This case is usually called the Coulomb case not because of the analogy of the singularity of the matrix $\|A(z)\|\simeq  |z|^{-1}$ at zero but because it can be derived from the Boltzmann equation in the grazing collision limit when particles interact via Coulomb forces \cite{DLD92}. The case $\gamma = 0$ is usually referred to as the Maxwellian case since the equation is reduced to a sort of degenerate linear Fokker-Planck equation preserving the same moments as the Landau equation \cite{ViMax}.
	
	This paper considers the numerical approximation of the Landau equation where the main focus is on the collision operator. Hence, for the rest of the paper, we shall consider exclusively the spatially homogeneous Landau equation
	\begin{equation} \label{landau}
	\partial_t f=\mathcal{Q}(f,f).
	\end{equation}
	The main formal properties of $\Qop$ rely on the following reformulation 
	\begin{equation} \label{Q}
	\Qop(f,f) = \dvn \cdot  \left\{ \int_{\RR^d} A(v-\vstar) f f_* \left (  \dvn \log f - \dvnstar \log f_* \right)\,  \rd \vstar\right\}\,,
	\end{equation}
	where $f=f(v)$, $f_*=f(\vstar)$ are used; and its weak form acting on appropriate test functions $\phi = \phi(v)$ 
	\begin{equation} \label{weak}
	\int_{\RR^d} \Qop(f,f) \phi \,\rd{v}= -\half \iint_{\RR^{2d}} (\dvn \phi - \dvnstar \phi_*)  \cdot  A(v-\vstar) \left(  \dvn \log f - \dvnstar \log f_*  \right) f f_* \, \rd v \,\rd\vstar\,.
	\end{equation}
	Then choosing $\phi(v) = 1, v, |v|^2$, one achieves conservation of mass, momentum and energy. Inserting $\phi(v) = \log f (v)$, one obtains the formal entropy decay with dissipation given by
	\begin{equation}
	\label{eq:entprod}
	\frac{d}{dt}\int_{\mathbb{R}^d}f \log f \,\rd{v} = -D(f(t,\cdot)):= -\frac{1}{2}\iint_{\RR^{2d}} B_{v,v_*} \cdot A(v-v_*)B_{v,v_*}ff_* \, \rd v\rd \vstar\leq 0\,,
	\end{equation}
	since $A$ is symmetric and semipositive definite, with $B_{v,v_*} := \dvn \log f - \dvnstar \log f_*$. The equilibrium distribution is given by the Maxwellian 
	\[
	\equilibrium_{\rho, u, T} = \frac{\rho}{(2\pi T)^{d/2}} \text{exp} \left(- \frac{ |v-u|^2}{2T} \right),
	\]
	for some constants $\rho, T$ determing the density and the temperature of the particle ensemble, and mean velocity vector $u$.
	We refer to \cite{Vi98,GZ} for its proof that we will recall in a regularized setting in Section 2.
	
	Besides the applications to physics, the Landau equation presents interesting mathematical challenges. The corresponding homogeneous equation arising from so-called hard potential cases ($\gamma \geq 0$) is by now very well understood in terms of existence, uniqueness, smoothing, decay, and moment/$L^p$ propagation owing primarily to the work of Desvillettes and Villani~\cite{MR1737547, MR1737548} and the references therein. One of the key ingredients is taking advantage of the finite entropy dissipation~\eqref{eq:entprod} which gives rise to the robust notion of `H-solution' as introduced by Villani~\cite{Vi98}. Less can be said at the moment about the soft potentials ($\gamma < 0$). The first major breakthrough in this direction was a global existence and uniqueness result by Guo~\cite{guo_landau_2002}. Of course, the result by Guo relied on many assumptions such as closeness to a Maxwellian for the initial data, high regularity, and small entropy. However, it remains difficult to weaken these assumptions while maintaining local existence or uniqueness. In the soft potential setting, there is even a dichotomy between the moderately soft potentials ($-2 \leq \gamma < 0$) and the extremely soft potentials ($\gamma < -2$). For example, Fournier and Gu\'erin were able to prove uniqueness of weak solutions using probabilistic techniques, yet additional initial moment and $L^p$ assumptions are needed as $\gamma$ becomes more negative with the result becoming only locally guaranteed when $\gamma$ is sufficiently negative~\cite{fournier_well-posedness_2009}. Incidentally, while their approach involved heavy probability machinery, they proved uniqueness through estimates involving the 2-Wasserstein distance, the fundamental quantity in the theory of gradient flows which is the perspective we adopt. An incomplete selection of contributions in the soft potential case that illustrate these difficulties is~\cite{Des1,Des2,ALL,CM,CDH,Wunotme,GZ,GZ1,GZ2,SW}. A cursory glance at some of these references highlights the variety of techniques needed to tackle the difficulties with soft potentials. Gualdani and her colleagues favour the degenerate parabolic perspective when viewing the Landau equation with radial symmetry~\cite{GG,GZ1}. The main issues of this approach are the quadratic non-linearity coming from the quadratic collision operator as well as the degeneracy of the diffusion matrix which depends on the solution. In~\cite{Des1,Des2}, Desvillettes obtains weighted Fisher information estimates depending on the dissipation~\eqref{eq:entprod}. More precisely, Desvillettes proved the estimate
	\[
	\int_{\RR^d}(1+|v|^2)^\frac{\gamma}{2}|\nabla\sqrt{f}|^2 \,\rd v \leq C(1+D(f(t,\cdot)))\,,
	\]
	where $C$ is a constant depending on the initial entropy, energy, and mass of $f$. For the soft potential case, $\gamma<0$, this estimate suggests the unavailability of an unweighted Fisher information bound. This hampers the standard methods passing through the Csisz\'ar-Kullback and logarithmic Sobolev inequalities to obtain rates of convergence to the Maxwellian equilibrium~\cite{T,TV}. Exponential convergence in the hard potential case is known~\cite{C} however, it seems that the analogous statement in the soft potential case currently holds only for the \textit{linearized} collision operator~\cite{GZ,CTW,CTWerr}. 
	
	We now turn to a new interpretation of the homogeneous Landau equation as a formal gradient flow on the set of probability measures. Following recent works in nonlinear Fokker-Planck equations \cite{CMV03,AGS,CLSS10} and the Boltzmann equation \cite{erbar2016}, we rewrite the homogeneous Landau equation as a nonlinear continuity equation where the velocity field is determined by the variational derivative of the entropy functional. More precisely, denoting by $E(f) = \int_{\RR^d} f \log f \,\rd v$ the entropy functional, we can rewrite \eqref{Q} as the nonlinear continuity equation
	\begin{equation*} 
	\Qop(f,f) = \dvn \cdot \left\{\left(\int_{\RR^d} A(v-\vstar) \left(  \dvn \frac{\delta E}{ \delta f} - \dvnstar \frac{\delta E_*}{\delta f_*} \right)\,  f_*  \rd \vstar \right) f \right\}\,,
	\end{equation*}
	and \eqref{weak} as
	\begin{equation*} 
	\int_{\RR^d} \Qop(f,f) \phi \,\rd{v} = -\half \iint_{\RR^{2d}} (\dvn \phi - \dvnstar \phi_*)  \cdot A(v-\vstar) \left(  \dvn \frac{\delta E}{ \delta f} - \dvnstar \frac{\delta E_*}{\delta f_*} \right) f f_* \, \rd v\,\rd \vstar\,.
	\end{equation*}
	Here $\frac{\delta E}{ \delta f}=\log f$ is the variational derivative, modulo constants, of the entropy functional in the set of nonnegative densities with a fixed mass. Therefore, one can write a formal gradient flow structure relative to a distance defined by a different action functional to the Boltzmann equation \cite{erbar2016}. The theoretical approach using this action functional will be pursued elsewhere. For our purposes, the crucial point is to realize that the homogeneous Landau equation can now be formally regularized without changing its main conservation and dissipative properties by regularizing the entropy functional. This strategy was recently used in the case of nonlinear Fokker-Planck equations with success \cite{carrillo2017blob} from both theoretical and numerical viewpoints. Analogously to \cite{carrillo2017blob}, consider a mollifier, Gaussian for simplicity, given by
	\begin{equation}\label{mollifier}
	\psi_\varepsilon(v) = \frac{1}{(2\pi \varepsilon)^\frac{d}{2}}\exp\left(-\frac{|v|^2}{2\varepsilon}\right)\,,
	\end{equation}
	for any  $\varepsilon>0$, and the associated regularized entropy as
	\begin{equation}\label{regent}
	E_\varepsilon(f) = \int_{\mathbb{R}^d} (f\ast\psi_\varepsilon)\log (f\ast \psi_\varepsilon)\, \rd{v} \,.
	\end{equation}
	The corresponding homogeneous Landau equation is given by
	\begin{equation}\label{Landaureg}
	\partial_t f = \Qop_\varepsilon (f,f) := - \dvn \cdot( U_\varepsilon(f) f)\,,
	\end{equation}
	with
	\begin{align} \label{QQe}
	U_\varepsilon(f) := - \int_{\RR^d} A(v-\vstar) \left(  \dvn \frac{\delta E_\varepsilon}{ \delta f} - \dvnstar \frac{\delta E_{\varepsilon,*}}{\delta f_*} \right)  f_*\,  \rd \vstar \,.
	\end{align}
	It is now easy to realize that the nonlinear nonlocal velocity field $U_\varepsilon(f)$ associated to the homogeneous regularized Landau equation makes sense even for $f$ being a convex combination of a finite number of Dirac Deltas. This allows us to introduce a particle method associated to the regularized kernel \eqref{QQe}. We will show that the associated particle method keeps the same conservation properties at the discrete level as the Landau equation \eqref{landau} while it dissipates the regularized entropy functional \eqref{regent}.
	
	Concerning deterministic particle methods for diffusive-type equations, there have been several strategies in the literature by introducing suitable regularizations of the flux of the continuity equation \cite{Russo2}. The case of the heat equation $\tfrac{\partial \rho}{\partial t} = \Delta \rho$ was considered in \cite{DegondMustieles,Russo} by interpreting the Laplacian as induced by a velocity field $u$, $\Delta \rho = -\nabla \cdot (u \rho)$, $u = -\nabla \rho/\rho$, and regularizing the numerator and denominator separately by convolution with a mollifier. Well-posedness of the resulting system of ordinary differential equations and a priori estimates relevant to the method were studied in \cite{LacombeMasGallic} and extended to nonlinear diffusions subsequently \cite{oelschlager1990large,LionsMasGallic, MGallic}. Variations of these methods allowing the weights to change in time were also analyzed in \cite{DegondMasGallic1, DegondMasGallic2}. The main disadvantage of these existing deterministic particle methods is that, with the exception of \cite{LionsMasGallic} for the porous medium equation $\tfrac{\partial \rho}{\partial t} = \Delta \rho^2$, they do not preserve the gradient flow structure \cite{LionsMasGallic}. For further background on deterministic particle methods, we refer to the review \cite{Chertock}, and for particle methods applied to transport equations, we refer to \cite{CR,CP,CB}. As mentioned earlier, we have followed here the strategy in \cite{carrillo2017blob} of regularizing the free energy functional instead in order to keep the gradient flow structure at the particle method level.
	
	To approximate the Landau operator, a popular method is to use the Fourier-Galerkin spectral method \cite{PRT00}. This method takes advantage of the convolutional property of the collision integral so that the resulting method can be implemented efficiently using fast Fourier transform (FFT). To be specific, the total complexity of one time evaluation of the collision operator requires $O(N_v^d\log N_v)$ complexity, where $N_v$ is the number of Fourier modes in each velocity dimension. We also refer to \cite{FP02,Gamba1,Gamba2,Gamba3} for additional properties of spectral methods and applications to inhomogeneous problems by time splitting methods. As we shall see, the proposed particle method would require $O(N^{2})$ complexity, where $N$ is the total number of particles. Hence in terms of efficiency, it may not be as fast as the spectral method. However, it is able to preserve all the physical properties of the equation: positivity, conservation of mass, momentum, and energy, and entropy decay. This is in contrast to the spectral method, wherein the truncated Fourier approximation destroys the structure of the solution (only mass is conserved, no positivity, no conservation of momentum and energy, no entropy decay). Furthermore, $O(N^2)$ is the direct cost of the particle method (a naive implementation). With the help of the fast summation technique such as the treecode, this cost can be reduced to $O(N\log N)$. We will explore this acceleration in the current paper while an in-depth study will be deferred to future work. 
	
	It is important to mention that the particle-in-cell (PIC) method \cite{BL,HE,TMST} is currently the dominant method to solve the Vlasov-type equation (equation \eqref{fulllandau} without the collision term) which is essentially a particle method. Hence, our proposed method is a natural candidate to be coupled with the PIC methodology to yield an efficient Lagrangian solver for the full Landau equation. The numerical exploration of these ideas in inhomogeneous problems is certainly a research topic of great interest, constituting a major future direction. For completeness, we finally mention that Eulerian methods based on mesh discretizations in velocity have also been proposed preserving the main properties of the Landau operator in \cite{DL2,Lem,BCDL} and the references therein. However, they are more difficult to incorporate within the PIC approach for spatially inhomogeneous problems.
	
	In the next section, we will analyze the properties of the regularized homogeneous Landau operator \eqref{QQe}. Section 3 is devoted to the introduction of the particle method and its properties. We end up in Section 4 with a thorough numerical study of its performance, comparison to exact solutions, computable convergence order and simulations in cases of interest for homogeneous problems. Appendix A gives a short summary of exact BKW solutions of the Landau equation as a reference. Appendix B recalls the basic aspects of the treecode strategy and its application to our particle method.

	\section{Regularized Landau equation: basic properties and kernel}
	
	In this section, we explore some theoretical properties associated to the homogeneous Landau equation with a regularized entropy functional.  The nonlinearity of $\Qop$ makes it difficult to directly regularize $f$ in a structure preserving way. Instead, the regularization is introduced at the level of the entropy functional which then modifies the homogeneous Landau equation. As mentioned in the introduction, we define, for any given $\varepsilon>0$, the regularized entropy as in (\ref{regent}) acting on $L^1_+(\RR^d)$ functions. The functions $\psi_\varepsilon(v)$ are mollifiers, fixed to be Gaussians with centre of mass at the origin and variance-covariance matrix equal to $\varepsilon I$ as in \eqref{mollifier} for simplicity. Notice that the regularized entropy is well-defined and its first variation with respect to constant mass densities $f$ gives
	\begin{equation}\label{eq:1stvarEeps}
	\frac{\delta E_\varepsilon}{\delta f} = \psi_\varepsilon*\log (f*\psi_\varepsilon)\,, \quad \nabla_v \frac{\delta E_\varepsilon}{\delta f}= (\nabla \psi_\eps) *\log (f*\psi_\eps)\,,
	\end{equation}
	after some computations, see \cite{carrillo2017blob} for details. Accordingly, this modifies the Landau equation in~\eqref{landau} to \eqref{Landaureg} with the nonlocal nonlinear velocity field given by \eqref{QQe}.
	
	The aim of this section is to show that equation~\eqref{Landaureg} preserves important structural properties as with the original homogeneous Landau equation. To fix ideas, we introduce a preliminary notion of a weak solution which we can refine after proving the standard conservation properties. For $p>0$ we will say that $g\in L_p^1(\mathbb{R}^d)$ to mean
	\[
	\int_{\mathbb{R}^d}(1+|v|^p)|g(v)|\rd{v} < \infty.
	\]
	Let us define
	\[
	\kappa(\gamma) = \left\{
	\begin{array}{cl}
	4+\gamma, 	&-2 \leq \gamma  \leq 0	\\
	6+\frac{\gamma}{2}, 	&-4 \leq \gamma <-2
	\end{array}
	\right. .
	\]
	
	\begin{definition}
		[Weak $\eps$-solution]\label{defn:wkLandau}
		We say that a nonnegative $f\in C([0,T];L_{\kappa(\gamma)}^1(\mathbb{R}^d))$ (denoted $f(t,v)$ whenever a time derivative is involved or just $f$) is a weak $\eps$-solution to equation~\eqref{Landaureg} if for every $\phi \in C_0^\infty((0,T)\times \mathbb{R}^d)$ we have
		\begin{equation}
		\label{eq:1wk}
		\int_0^T\int_{\mathbb{R}^d}\partial_t \phi f(t,v)\, \rd{v} \,\rd{t} - \frac{1}{2}\int_0^T \iint_{\mathbb{R}^{2d}}(\nabla_v \phi - \nabla_{v_*}\phi_*)\cdot A(v-v_*)\left(  \dvn \frac{\delta E_\eps}{ \delta f} - \dvnstar \frac{\delta E_{\eps,*}}{\delta f_*} \right) f f_* \, \rd {v}\, \rd\vstar\, \rd{t} = 0.
		\end{equation}
	\end{definition}
	
	Let us investigate the meaning of the weighted $L_{\kappa}^1$ requirement on $f$. We claim this is sufficient to make sense of the triple integral in~\eqref{eq:1wk}. Here, we are mainly concerned with the soft potentials given by $-4 \leq \gamma \leq 0$.
	In particular, since $\kappa \geq 2$ we have
	\[
	\sup_{t\in[0,T]}\int_{\mathbb{R}^d}|v|^2 f(t,v)\,\rd{v}<\infty,
	\]
	which ensures the following bound
	\begin{equation}
	\label{eq:logquad}
	\sup_{t\in[0,T]}|\log [f(t,\cdot)*\psi_\eps](v)| \leq C(\eps) (1 + |v|^2)\,,
	\end{equation}
	where $C=C(\eps)>0$ is a uniform constant depending only on $\eps>0$. Estimate~\eqref{eq:logquad} is obtained by computations similar to~\cite[Lemma 2.6]{CC92}. If more constants are introduced, we recycle $C$ to absorb them. Now let us investigate $B_{v,v_*}^\eps := \nabla_v \frac{\delta E_\eps}{\delta f} - \nabla_{v_*} \frac{\delta E_{\eps,v_*}}{\delta f_*}$. By~\eqref{eq:1stvarEeps}, this has the form
	\[
	B_{v,v_*}^\eps = C(\eps) \int_{\mathbb{R}^d} \left((v-v') \psi_\eps(v-v') - (v_*-v')\psi_\eps(v_*-v')\right) \log (f*\psi_\eps)(v')\,\rd v'\,.
	\]
	Applying estimate~\eqref{eq:logquad} gives
	\[
	|B_{v,v_*}^\eps| \leq C(\eps)\int_{\mathbb{R}^d} \left|
	(v-v') \psi_\eps(v-v') - (v_*-v')\psi_\eps(v_*-v')
	\right|(1+|v'|^2) \,	\rd v'\,.
	\]
	Consider first the easier moderately soft potential case $\gamma \geq -2$ and recall $A(v-v_*) = |v-v_*|^{\gamma+2}\Pi(v-v_*)$ where $\Pi(z)$ is the projection into $\{z\}^\perp$. For every test function, we have the bound
	\begin{align*}
	&\left|(\nabla_v \phi - \nabla_{v_*}\phi_*)\cdot A(v-v_*)\left(  \dvn \frac{\delta E_\eps}{ \delta f} - \dvnstar \frac{\delta E_{\eps,*}}{\delta f_*} \right) \right|   	\\
	&\qquad\leq C(\eps,\phi,d,\gamma) (|v|^{2+\gamma} + |v_*|^{2+\gamma})\int_{\mathbb{R}^d} (
	|(v-v') \psi_\eps(v-v')| + |(v_*-v')\psi_\eps(v_*-v')|
	)(1+|v'|^2) \,	\rd v' \,.
	\end{align*}
	By the change of variables $v' \mapsto v-v'$ and $v' \mapsto v_* - v'$, we have the following estimate
	\[
	\int_{\mathbb{R}^d} (
	|(v-v') \psi_\eps(v-v')| + |(v_*-v')\psi_\eps(v_*-v')|
	)(1+|v'|^2) 	\rd v' \leq C(\eps)(1+ |v|^2+|v_*|^2)\,.
	\]
	This can be used to estimate the triple integral of~\eqref{eq:1wk} by
	\begin{align*}
	&\int_0^T \iint_{\mathbb{R}^{2d}}\left|(\nabla_v \phi - \nabla_{v_*}\phi_*)\cdot A(v-v_*)\left(  \dvn \frac{\delta E_\eps}{ \delta f} - \dvnstar \frac{\delta E_{\eps,*}}{\delta f_*} \right) \right| f f_*\,  \rd v\,\rd{v_*} \,\rd{t}  	\\
	&\qquad\qquad\leq C(\eps,\phi,d,\gamma) \int_0^T \iint_{\mathbb{R}^{2d}} (|v|^{2+\gamma} + |v_*|^{2+\gamma})(1+|v|^2+|v_*|^2)ff_*\, \rd v \,\rd v_* \,\rd t\,.
	\end{align*}
	In this case, the $\kappa(\gamma) = 4+\gamma$ weight becomes clear to ensure absolute integrability.
	
	Let us now turn to the very soft potential case $-4 \leq \gamma < -2$. The same trick above will not work because the weight $|v-v_*|^{2+\gamma}$ is singular. Instead, we split $|v-v_*|^{2+\gamma} = |v-v_*|^{1+\frac{\gamma}{2}}|v-v_*|^{1+\frac{\gamma}{2}}$ so that we have
	\begin{align*}
	&\left|(\nabla_v \phi - \nabla_{v_*}\phi_*)\cdot A(v-v_*)\left(  \dvn \frac{\delta E_\eps}{ \delta f} - \dvnstar \frac{\delta E_{\eps,*}}{\delta f_*} \right) \right|	\\
	&\qquad\qquad\leq C(\eps,d)  \frac{|\nabla_v \phi - \nabla_{v_*}\phi_*|}{|v-v_*|^{-(1+\frac{\gamma}{2})}}\int_{\mathbb{R}^d} |v-v_*|^{1+\frac{\gamma}{2}}\left|
	(v-v') \psi_\eps(v-v') - (v_*-v')\psi_\eps(v_*-v')
	\right|(1+|v'|^2) \,	\rd v'\,.
	\end{align*}
	Splitting the weight allows us to see that $-(1+\frac{\gamma}{2}) \in (0,1]$ in the very soft potential case so that 
	$$
	\frac{|\nabla_v \phi - \nabla_{v_*}\phi_*|}{|v-v_*|^{-(1+\frac{\gamma}{2})}}
	$$
	can be estimated by the $C^{1,-(1+\frac{\gamma}{2})}$ norm of $\phi$. For the remaining $|v-v_*|^{1+\frac{\gamma}{2}}$ term within the integral over $v'$ we use the mean value theorem with $\left|
	(v-v') \psi_\eps(v-v') - (v_*-v')\psi_\eps(v_*-v')
	\right|$ to smother the singularity. Indeed, due to the form of $\psi_\eps$, we have that
	\[
	|v-v_*|^{1+\frac{\gamma}{2}}\left|
	(v-v') \psi_\eps(v-v') - (v_*-v')\psi_\eps(v_*-v')
	\right| \leq C(\eps)|v-v_*|^{2+\frac{\gamma}{2}}(1+|\xi - v'|^2)|\psi_\eps(\xi-v')|\,,
	\]
	where $\xi \in [v,v_*]$. Substitute this inequality back and use the change of variables $v'\mapsto \xi-v'$ to obtain
	\begin{align*}
	&\left|(\nabla_v \phi - \nabla_{v_*}\phi_*)\cdot A(v-v_*)\left(  \dvn \frac{\delta E_\eps}{ \delta f} - \dvnstar \frac{\delta E_{\eps,*}}{\delta f_*} \right) \right|  	\\
	&\qquad\qquad\leq C(\eps,d,\phi) |v-v_*|^{2+\frac{\gamma}{2}}\int_{\mathbb{R}^d}|v'|^2
	\psi_\eps(v')(1+|\xi-v'|^2) \,	\rd v' .
	\end{align*}
	The integral produces a term that has growth bounded by $(1+|\xi|^4)$ depending on $\epsilon$. Since $\xi \in [v,v_*]$, we can estimate $|\xi|^4 \leq C(|v|^4 + |v_*|^4)$. Inserting this back into the triple integral of~\eqref{eq:1wk} finally yields
	\begin{align*}
	&\quad \int_0^T \iint_{\mathbb{R}^{2d}}\left|(\nabla_v \phi - \nabla_{v_*}\phi_*)\cdot A(v-v_*)\left(  \dvn \frac{\delta E_\eps}{ \delta f} - \dvnstar \frac{\delta E_{\eps,*}}{\delta f_*} \right) \right|f f_*\,  \rd v\,\rd{v_*} \,\rd{t}  	\\
	&\qquad\qquad\qquad\qquad \leq C(\eps,d,\phi) \int_0^T \iint_{\mathbb{R}^{2d}} (1+|v|^{6+\frac{\gamma}{2}}+|v_*|^{6+\frac{\gamma}{2}}) ff_*\, \rd v\, \rd v_*\, \rd t\,.
	\end{align*}
	
	Equation~\eqref{eq:1wk} can be tested against more general functions $\phi$. As in~\cite[Remark 8.1.1]{AGS}, an equivalent expression of~\eqref{eq:1wk} is
	\begin{equation}
	\label{eq:diffact}
	\frac{d}{dt}\int_{\mathbb{R}^d} \phi f(t,v)\, \rd{v} = -\frac{1}{2}\iint_{\mathbb{R}^{2d}}(\nabla_v \phi - \nabla_{v_*}\phi_*)\cdot A(v-v_*)B_{v,v_*}^\eps f f_*\,  \rd v\,\rd{v_*}, \quad \forall \phi \in C_0^\infty (\mathbb{R}^d).
	\end{equation}
	Furthermore, \cite[Lemma 8.1.2]{AGS} allows us to refine the solution to be weakly continuous $t \in [0,T]\mapsto f(t,\cdot) \in L_\kappa^1(\mathbb{R}^d)$ so that whenever $\phi \in C_0^2((0,T)\times \mathbb{R}^d)$,
	\begin{align}
	\label{eq:wkcts}
	\begin{split}
	&\int_{t_1}^{t_2}\int_{\mathbb{R}^d}\partial_t \phi f(t,v)\, \rd{v} \,\rd{t} - \frac{1}{2}\int_{t_1}^{t_2} \iint_{\mathbb{R}^{2d}}(\nabla_v \phi - \nabla_{v_*}\phi_*)\cdot A(v-v_*)B_{v,v_*}^\eps f f_* \,\rd v\,\rd{v_*}\, \rd{t} \\ &\qquad= \int_{\mathbb{R}^d} \phi(t_2,v)f(t_2,v)\,\rd{v} - \int_{\mathbb{R}^d}\phi(t_1,v)f(t_1,v)\,\rd{v}\,.
	\end{split}
	\end{align}
	
	\begin{lemma}\label{lem:contest}
		Let $\phi$ be an admissible test function and $f$ be a weak $\eps$-solution to~\eqref{Landaureg}. Assume further that
		\[
		\nabla_v\phi(v) - \nabla_{v_*}\phi(v_*) \in \ker A(v-v_*),
		\]
		then
		\[
		\frac{d}{dt} \int_{\mathbb{R}^d} \phi(v) f(t,v)\, \rd v = 0
		\]
		holds, and therefore $\int_{\mathbb{R}^d} \phi(v) f(t,v) \rd v$ is a conserved quantity.
	\end{lemma}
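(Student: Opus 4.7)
The plan is to read conservation directly off the differential form \eqref{eq:diffact}. For any admissible $\phi$,
\[
\frac{d}{dt}\int_{\mathbb{R}^d} \phi(v)\, f(t,v)\, \rd v = -\frac{1}{2}\iint_{\mathbb{R}^{2d}}(\nabla_v\phi - \nabla_{v_*}\phi_*)\cdot A(v-v_*)\, B_{v,v_*}^\varepsilon\, f f_*\, \rd v\, \rd v_*.
\]
Since $A(v-v_*)$ is symmetric, the kernel hypothesis $\nabla_v\phi(v) - \nabla_{v_*}\phi(v_*) \in \ker A(v-v_*)$ lets me transpose $A$ onto the first factor and obtain $(\nabla_v\phi - \nabla_{v_*}\phi_*)\cdot A(v-v_*) B_{v,v_*}^\varepsilon \equiv 0$ pointwise in $(v,v_*)$. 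The right-hand side then vanishes identically and $\int \phi f\,\rd v$ is time-independent.

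For $\phi \in C_0^\infty(\mathbb{R}^d)$ this is the entire argument, directly from \eqref{eq:diffact}. The physically relevant invariants $\phi \in \{1, v_i, |v|^2\}$ also satisfy the kernel hypothesis: for constants and linear $\phi$ the difference $\nabla_v \phi - \nabla_{v_*}\phi_*$ is identically zero, while for $\phi = |v|^2$ it equals $2(v-v_*)$, which lies in $\ker \Pi(v-v_*) = \ker A(v-v_*)$. These are not compactly supported, so I would introduce a smooth radial cutoff $\chi_R$ equal to $1$ on $B_R$, supported in $B_{2R}$, with $\|\nabla \chi_R\|_\infty \lesssim 1/R$, apply the integrated form \eqref{eq:wkcts} to $\phi \chi_R \in C_0^\infty$, and pass $R \to \infty$.

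The main obstacle is the cutoff estimate, since $\phi \chi_R$ does not satisfy the kernel condition on the annulus where $\chi_R$ varies. The residual integrand is supported on $\{\max(|v|,|v_*|)\geq R\}$ and picks up contributions from $\nabla \chi_R$. To show it vanishes I would invoke the pointwise estimates derived in the discussion preceding Definition \ref{defn:wkLandau}: the integrand in \eqref{eq:diffact} is dominated by $C(\varepsilon,\phi)(1 + |v|^{\kappa(\gamma)} + |v_*|^{\kappa(\gamma)}) f f_*$ in both the moderately and very soft regimes (with $\phi$ replaced here by the family $\phi\chi_R$, for which the relevant $C^{1,\alpha}$ norms are uniform in $R$). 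Since $f(t,\cdot) \in L^1_{\kappa(\gamma)}$ uniformly on $[0,T]$, this dominating function is integrable on $\mathbb{R}^{2d}$, and dominated convergence forces the residual to tend to $0$ as $R\to\infty$. Passing to the limit in \eqref{eq:wkcts} then yields $\int \phi f(t_2,\cdot)\,\rd v = \int \phi f(t_1,\cdot)\,\rd v$ for all $t_1,t_2 \in [0,T]$.
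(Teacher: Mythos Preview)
Your proof follows exactly the same route as the paper's: read off the vanishing directly from the symmetrized weak form~\eqref{eq:diffact} using the kernel hypothesis, then justify the non-compactly-supported cases $\phi\in\{1,v,|v|^2\}$ via a smooth cut-off and passage to the limit through~\eqref{eq:wkcts}. In fact you supply considerably more detail on the cut-off step than the paper, which simply writes ``we appeal to smooth cut-off arguments to approximate $1,v,|v|^2$'' and leaves it at that; one small caution is that for $\phi=|v|^2$ the $L^\infty$ bound on $\nabla(\phi\chi_R)$ is $O(R)$ rather than uniform, so in the moderately soft regime you should lean on the uniform bound on $D^2(\phi\chi_R)$ (hence the uniform Lipschitz/H\"older seminorm of the gradient) together with the kernel cancellation of the leading $2(v-v_*)$ term, rather than the raw $C^1$ norm.
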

	\begin{proof}
		We begin with the formal computations. Differentiating in time, we get
		\[
		\frac{d}{dt}\int_{\mathbb{R}^d}\phi(v) f(t,v) \,\rd v = -\frac{1}{2}\iint_{\mathbb{R}^{2d}} (\nabla_v\phi(v) - \nabla_{v_*}\phi(v_*)) \cdot A(v-v_*)B_{v,v_*}^\varepsilon  ff_* \,\rd v\,\rd v_* 	= 0\,.
		\]
		
		To justify these formal computations, we appeal to smooth cut-off arguments to approximate $1,v,|v|^2$ by admissible test functions using~\eqref{eq:wkcts} to compare $\int_{\mathbb{R}^d}\phi(v)f(0,v)\rd{v}$ with $\int_{\mathbb{R}^d}\phi(v)f(t,v)\rd{v}$.
	\end{proof}
	
	Since the kernel of the matrix $A(z)$ is spanned by $z$, a direct consequence of the previous result is that the mass, momentum, and energy for weak $\eps$-solutions of~\eqref{Landaureg} are conserved, i.e., 
	\begin{equation}\label{conservations}
	\frac{d}{dt}\left(
	\int_{\mathbb{R}^d}f(t,v)\,\rd v, \int_{\mathbb{R}^d} vf(t,v)\,\rd v, \int_{\mathbb{R}^d} |v|^2f(t,v)\,\rd v\right)=0\,.
	\end{equation}
	
	In this way, we define the mass, momentum, and energy of $f$ for all times by the constants $\rho, u, T$ related in the following way
	\begin{equation}
	\label{eq:moments}
	\rho = \int_{\mathbb{R}^d} f(t,v)\,\rd v\,, \quad \rho u = \int_{\mathbb{R}^d}vf(t,v)\, \rd v\,, \quad \rho u^2 + \rho dT = \int_{\mathbb{R}^d} |v|^2f(t,v) \,\rd v\,.
	\end{equation}
	
	As promised, we can refine the notion of weak $\eps$-solution. We add a finite dissipation property which is a mild assumption but yields theoretical and numerical advantages in the spirit of Villani's H-solution~\cite{Vi98}. One example of the analytic benefits is in~\cite[Proposition 4.2]{erbar2016} where Erbar recovers a strong upper gradient notion for the Boltzmann equation.
		\begin{definition}
			[Dissipative $\eps$-solution]\label{defn:refwkLandau}We say that $f\in C([0,T];L_{\kappa}^1(\mathbb{R}^d))$ is a dissipative $\eps$-solution with moments $(\rho,u,T)\in \mathbb{R}_+\times \mathbb{R}^d\times \mathbb{R}_+$ under the relation~\eqref{eq:moments} to the regularized Landau equation~\eqref{Landaureg} if it is a weak $\eps$-solution in the sense of Definition~\ref{defn:wkLandau} and
			\begin{enumerate}
				\item For every $\phi \in C_0^\infty(\mathbb{R}^d)$, equation~\eqref{eq:diffact} holds:
				\[
				\frac{d}{dt}\int_{\mathbb{R}^d} \phi f(t,v)\, \rd{v} = -\frac{1}{2}\iint_{\mathbb{R}^{2d}}(\nabla_v \phi - \nabla_{v_*}\phi_*)\cdot A(v-v_*)B_{v,v_*}^\eps f f_*  \,\rd v\,\rd{v_*}\,.
				\] \label{item:1refwk}
				\item The initial entropy is finite
				\[
				E_\eps(f(0,\cdot)) = \int_{\mathbb{R}^d} (f_0*\psi_\eps) \log (f_0*\psi_\eps) \rd v < \infty.
				\] \label{item:2rewk}
				\item The entropy-dissipation associated to the regularized equation is integrable in time
				\begin{equation}
				\label{eq:finitediss}
				D_\eps(f(t,\cdot)) := \frac{1}{2}\iint_{\mathbb{R}^{2d}} B_{v,v_*}^\eps \cdot A(v-v_*) B_{v,v_*}^\eps ff_* \,\rd{v}\,\rd{v_*} \in L^1(0,T).
				\end{equation}\label{item:3rewk}
			\end{enumerate}
	\end{definition}
	With the notion of a dissipative $\eps$-solution in hand, our next result displays the natural consequences of items~\ref{item:2rewk} and~\ref{item:3rewk}.
	
	\begin{lemma}
		\label{lem:Enonincrease}
		Let $f$ be a dissipative $\eps$-solution of the regularized Landau equation \eqref{Landaureg} with the collision operator given by \eqref{QQe}, then we have:
			\begin{equation}\label{entdis}
			E_\eps(f(t,\cdot)) - E_\eps(f(0,\cdot)) = -\frac{1}{2}\int_0^t \iint_{\mathbb{R}^{2d}}ff_* B_{v,v_*}^\eps \cdot A(v-v_*) B_{v,v_*}^\eps \rd v \rd v_* \rd s\leq 0,
			\end{equation}
			for all times $t\geq 0$.
	\end{lemma}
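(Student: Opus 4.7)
The plan is to identify $\frac{d}{dt}E_\eps(f(t,\cdot))$ with the weak form \eqref{eq:diffact} applied to the test function $\phi = \frac{\delta E_\eps}{\delta f} = \psi_\eps*\log(f*\psi_\eps)$, whose gradient is exactly $B_{v,v_*}^\eps$ in the difference form. Formally, the chain rule for the regularized entropy gives
\[
\tfrac{d}{dt}E_\eps(f(t,\cdot)) = \int_{\mathbb{R}^d}[1+\log(f*\psi_\eps)]\,(\partial_t f *\psi_\eps)\,\rd v = \int_{\mathbb{R}^d}\psi_\eps * \log(f*\psi_\eps)\;\partial_t f\,\rd v,
\]
where the constant term drops by conservation of mass (Lemma~\ref{lem:contest}). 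Plugging $\phi = \frac{\delta E_\eps}{\delta f}$ into \eqref{eq:diffact} and noting that $\nabla_v\phi-\nabla_{v_*}\phi_* = B_{v,v_*}^\eps$ produces the integrand on the right-hand side of \eqref{entdis}. Integrating in time from $0$ to $t$ gives the claimed identity, and the sign follows because $A(v-v_*)=|v-v_*|^{\gamma+2}\Pi(v-v_*)$ is symmetric positive semidefinite, so the integrand is pointwise nonnegative.

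The main obstacle is that $\phi = \psi_\eps * \log(f*\psi_\eps)$ is not an admissible $C_0^\infty$ test function: by the bound \eqref{eq:logquad} we only know that $\phi$ has at most quadratic growth and $\nabla_v \phi$ has at most linear growth. The plan to handle this is a cutoff and approximation argument. I would fix a smooth cutoff $\chi_R$ with $\chi_R\equiv 1$ on $B_R$, $\chi_R\equiv 0$ outside $B_{2R}$, and with $|\nabla \chi_R|\leq C/R$, and test \eqref{eq:wkcts} against $\phi_R(v) := \chi_R(v)\,[\psi_\eps*\log(f*\psi_\eps)](v)$, which is a legitimate compactly supported test function at each time.

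Passing to the limit $R\to\infty$ requires uniform-in-$R$ domination on each side. On the left, the time-integrated $\partial_t\phi_R f$ term is controlled by the moment bounds $f\in C([0,T];L^1_{\kappa(\gamma)})$ and the quadratic growth of $\phi$, so dominated convergence applies. On the right, the dangerous piece is the bilinear $A(v-v_*) B_{v,v_*}^\eps f f_*$ structure tested against $\nabla\phi_R - \nabla\phi_{R,*}$. Here I would use the finite entropy dissipation hypothesis \eqref{eq:finitediss} together with Cauchy–Schwarz:
\[
\Bigl|\iint (\nabla\phi_R-\nabla\phi_{R,*})\cdot A\,B^\eps_{v,v_*}\,ff_*\Bigr|\leq D_\eps(f)^{1/2}\,\Bigl(\iint (\nabla\phi_R-\nabla\phi_{R,*})\cdot A(\nabla\phi_R-\nabla\phi_{R,*})ff_*\Bigr)^{1/2},
\]
and the second factor can be bounded uniformly in $R$ by the moment estimates exactly as in the integrability discussion preceding Lemma~\ref{lem:contest} (split into moderately/very soft cases, using \eqref{eq:logquad} and $f\in L^1_{\kappa(\gamma)}$). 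Since $D_\eps(f)\in L^1(0,T)$ by assumption, dominated convergence as $R\to\infty$ yields the integrated identity.

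Finally, once the equality is established, the inequality is immediate: $A(z)$ is positive semidefinite, so $B_{v,v_*}^\eps \cdot A(v-v_*) B_{v,v_*}^\eps\geq 0$ pointwise, and the double integral weighted by $ff_*\geq 0$ is nonnegative. The hardest step will be the uniform bound needed to let the cutoff tend to infinity in the $\gamma<-2$ regime, where the singular factor $|v-v_*|^{\gamma+2}$ must be absorbed by the smoothness of the mollified $\nabla\phi_R$ via a mean-value argument, following the same splitting $|v-v_*|^{\gamma+2}=|v-v_*|^{1+\gamma/2}\cdot|v-v_*|^{1+\gamma/2}$ used in the discussion of Definition~\ref{defn:wkLandau}.
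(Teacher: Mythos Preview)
Your formal computation is identical to the paper's: differentiate $E_\eps(f(t,\cdot))$, move the mollifier across by self-adjointness, substitute the equation, integrate by parts, and symmetrise $v\leftrightarrow v_*$ to obtain $-D_\eps(f)$. The paper then disposes of the rigour in one line, invoking ``a standard mollification in time argument.''

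Where you diverge is in the rigorous justification. The paper works directly with the PDE under an assumed pointwise smoothness and repairs only the \emph{time} regularity via mollification. You instead go through the weak formulation~\eqref{eq:wkcts}, introduce a \emph{spatial} cutoff $\chi_R$, and pass $R\to\infty$ using Cauchy--Schwarz against the finite dissipation $D_\eps\in L^1(0,T)$. This is a perfectly reasonable and in some ways more explicit route, since it actually uses hypothesis~\ref{item:3rewk} of Definition~\ref{defn:refwkLandau} and the $L^1_{\kappa(\gamma)}$ integrability analysis preceding Lemma~\ref{lem:contest}, whereas the paper's one-line appeal leaves those details implicit.

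One point to tighten: your test function $\phi_R(t,v)=\chi_R(v)\,[\psi_\eps*\log(f(t,\cdot)*\psi_\eps)](v)$ depends on $t$ through $f$, and~\eqref{eq:wkcts} requires $\phi\in C_0^2((0,T)\times\mathbb{R}^d)$. Since $f$ is only known to lie in $C([0,T];L^1_{\kappa})$, the $\partial_t\phi_R$ term you invoke is not a priori well defined. The clean fix is exactly the time-mollification the paper cites: convolve $f$ (or $\phi_R$) with a smooth kernel in $t$, run your cutoff argument at that level, and then remove the time mollifier. So your spatial cutoff and the paper's time mollification are complementary rather than alternative; a complete proof needs both.
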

	\begin{proof}
			To begin, let us pretend that $f$ satisfies~\eqref{Landaureg} pointwise with sufficient smoothness in time. Formally, we differentiate $E_\eps(f(t,\cdot))$ in time to obtain
			\begin{align*}
			\frac{dE_\eps}{dt} &= \frac{d}{dt}\left(
			\int_{\mathbb{R}^d} f*\psi_\eps \log (f*\psi_\eps) \rd v
			\right) = \int_{\mathbb{R}^d} (\partial_t f *\psi_\eps) (\log (f*\psi_\eps) + 1) \rd v  \\
			&= \int_{\mathbb{R}^d} \partial_t f (\psi_\eps * \log (f*\psi_\eps) + 1) \rd v   
			= -\int_{\mathbb{R}^d} \nabla_v\cdot(U_\eps(f)f) (\psi_\eps * \log (f*\psi_\eps) + 1) \rd v\\
			&=  \int_{\mathbb{R}^d} \nabla_v\cdot\left(
			f \int_{\mathbb{R}^d} f_* A(v-v_*) B_{v,v_*}^\eps \rd v_*
			\right)\left(\frac{\delta E_\eps}{\delta f}+1\right) \rd v     \\
			&= -\iint_{\mathbb{R}^{2d}} ff_* \nabla_v \frac{\delta E_\eps}{\delta f} \cdot A(v-v_*) B_{v,v_*}^\eps \rd v \rd v_* = -\frac{1}{2} \iint_{\mathbb{R}^{2d}} ff_* B_{v,v_*}^\eps \cdot A(v-v_*) B_{v,v_*}^\eps \rd v \rd v_*.
			\end{align*}
			In the last line we have used integration by parts as well as symmetrizing $v\leftrightarrow v_*$ to recover $B_{v,v_*}^\eps$ from $\nabla \frac{\delta E_\eps}{\delta f}$. Integrating the ends of this equality in time gives~\eqref{entdis}. To obtain the full result with the minimal time regularity of dissipative $\eps$-solutions, we appeal to a standard mollification in time argument.
	\end{proof}
	
	In the rest of this section, we follow the strategy of~\cite[Theorem 4]{GZ} and \cite[Lemma 3]{Vi96} to deduce that stationary states of the homogeneous regularized Landau equation \eqref{Landaureg} can be characterized by Maxwellians. Since we are working with weak $\eps$-solutions, let us be specific and define what we mean by stationary states.
	\begin{definition}
		[Stationary states]\label{defn:station}We say that a dissipative $\eps$-solution $f$ is a stationary state to the homogeneous regularized Landau equation~\eqref{Landaureg} if for every test function $\phi \in C_0^\infty(\mathbb{R}^d)$,
		\[
		\iint_{\mathbb{R}^{2d}}(\nabla_v \phi - \nabla_{v_*}\phi_*)\cdot A(v-v_*)B_{v,v_*}^\eps f f_*  \,\rd v\,\rd{v_*} = 0\,, \quad \forall t\in[0,T].
		\]
	\end{definition} 
	We can use this definition with Lemma~\ref{lem:Enonincrease} to characterize the first variation of the entropy for a stationary state.
	
	\begin{lemma}\label{lem:1stvarquad}
		If $f$ is a stationary state of the regularized Landau equation \eqref{Landaureg} or equivalently $f$ is in the kernel of \eqref{QQe}, then the first variation of $E_\varepsilon$ is a quadratic polynomial in $v$, that is
		\begin{equation}\label{eq:1stvarquad}
		\frac{\delta E_\varepsilon}{\delta f} = \lambda^{(0)} + \lambda^{(1)}\cdot v + \frac{\lambda^{(2)}}{2}|v|^2\,.
		\end{equation}
		The constants $\lambda^{(0)},\lambda^{(2)}\in \mathbb{R}$ and $\lambda^{(1)}\in\mathbb{R}^d$ (depending on $\varepsilon$) can be determined by the conserved quantities \eqref{conservations} (see later in Lemma~\ref{lem:ker=Gauss}).
	\end{lemma}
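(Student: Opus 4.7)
The plan is to exploit the stationary condition to force the entropy dissipation to vanish, then use the kernel structure of $A(v-v_*)$ to derive a pointwise algebraic constraint on $g := \tfrac{\delta E_\eps}{\delta f}$, and finally differentiate that constraint to identify $g$ as a quadratic polynomial.

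First I would observe that since $f$ is stationary, $E_\eps(f(t,\cdot))$ is constant in $t$; combined with the entropy identity of Lemma~\ref{lem:Enonincrease}, this forces $\int_0^t D_\eps(f(s,\cdot))\,\rd s = 0$, and pointwise non-negativity of the integrand gives $D_\eps(f) = 0$. Because $A(v-v_*) = |v-v_*|^{\gamma+2}\Pi(v-v_*)$ is positive semidefinite with one-dimensional kernel spanned by $(v-v_*)$, the vanishing of $D_\eps(f)$ implies that $B_{v,v_*}^\eps = \nabla g(v) - \nabla g(v_*)$ is parallel to $(v-v_*)$ on $\mathrm{supp}(ff_*)$, so there is a scalar $\alpha(v,v_*)$ with
\[
\nabla g(v) - \nabla g(v_*) = \alpha(v,v_*)(v-v_*)\,.
\]

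To push this pointwise identity off the support, I would invoke analyticity: since $f*\psi_\eps$ is strictly positive and real-analytic on $\mathbb{R}^d$ (convolution against a Gaussian), so is $g = \psi_\eps*\log(f*\psi_\eps)$, making $(\nabla g(v) - \nabla g(v_*))\wedge(v-v_*)$ an analytic function on $\mathbb{R}^{2d}$ that vanishes on the positive-measure set $\mathrm{supp}(f)\times\mathrm{supp}(f)$. A real-analytic function of several variables vanishing on a set of positive Lebesgue measure is identically zero, so the identity extends to all of $\mathbb{R}^{2d}$. I would then differentiate the identity with respect to $v_{*,j}$ and specialize to $v=v_*$ to obtain $\nabla^2 g(v) = \beta(v)I$ with $\beta(v):=\alpha(v,v)$. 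Componentwise this reads $\partial_i\partial_j g = \beta\,\delta_{ij}$, so $\partial_i\partial_j g = 0$ for $i\neq j$; hence $\partial_i g$ depends only on $v_i$, and therefore $\partial_i^2 g = \beta$ depends only on $v_i$ for each $i=1,\ldots,d$. Since $d\geq 2$ this forces $\beta$ to be a constant $c$, and integrating $\nabla^2 g = cI$ twice gives $g(v) = \tfrac{c}{2}|v|^2 + a\cdot v + b$, which is precisely the claimed form~\eqref{eq:1stvarquad} with $\lambda^{(2)}=c$, $\lambda^{(1)}=a$, and $\lambda^{(0)}=b$.

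The hardest step I anticipate is the extension argument above: the dissipation identity only controls things on $\mathrm{supp}(ff_*)$, which could a priori be very thin, and differentiating the relation $\nabla g(v) - \nabla g(v_*) = \alpha(v,v_*)(v-v_*)$ naively on such a set would be hazardous. The cleanest resolution is the real-analyticity of $g$ inherited from the Gaussian mollifier, which globalizes the constraint once it is known on a set of positive measure; the remainder of the argument is then a routine constant-coefficient ODE integration, in the spirit of Villani's characterization of Maxwellian equilibria in~\cite{Vi96}.
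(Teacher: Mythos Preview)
Your proof is correct and follows essentially the same strategy as the paper's: vanishing dissipation forces $B_{v,v_*}^\eps \in \ker A(v-v_*)$, whence $\nabla g(v)-\nabla g(v_*)$ is parallel to $v-v_*$, and differentiating then setting $v=v_*$ yields $\nabla^2 g = \beta(v) I$ with $\beta$ constant, which integrates to~\eqref{eq:1stvarquad}. The only differences are minor: (i) you insert a real-analyticity argument to extend the parallelism relation from $\mathrm{supp}(f)\times\mathrm{supp}(f)$ to all of $\mathbb{R}^{2d}$, a point the paper does not address explicitly (it simply invokes the smoothness granted by the $\eps$-regularization when differentiating); and (ii) you deduce constancy of $\beta$ by noting that $\beta=\partial_i^2 g$ must depend on $v_i$ alone for each $i$, whereas the paper takes one further derivative and permutes indices via Schwarz's theorem---both arguments are equivalent and require $d\geq 2$.
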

	\begin{proof}
		This proof adopts the strategy of~\cite[Theorem 4]{GZ}. Lemma~\ref{lem:Enonincrease} implies that the entropy-dissipation, the right-hand side of \eqref{entdis}, is zero. Moreover, the entropy-dissipation is zero if and only if the quadratic form in the integrand of the right-hand side of \eqref{entdis} is zero. By definition of $A(v-v_*)$, we must have that $B_{v,v_*}^\varepsilon$ belongs to the kernel of $A(v-v_*)$ which is characterized by those vectors which are linearly dependent with $v-v_*$. Thus, there exists $\lambda^{(2)}:\mathbb{R}^d \times \mathbb{R}^d \to \mathbb{R}$ with the property
		\begin{equation}\label{eq:11}
		\nabla_v\frac{\delta E_\varepsilon}{\delta f}(v) - \nabla_{v_*}\frac{\delta E_\varepsilon}{\delta f}(v_*) = \lambda^{(2)}(v,v_*)(v-v_*)\,.
		\end{equation}
		At this point, we study $\lambda^{(2)}$ and seek to show that the diagonal mapping is constant. Immediately from (\ref{eq:11}), we notice that $\lambda^{(2)}(v,v_*)=\lambda^{(2)}(v_*,v)$. For any $i,j \in \{1,\dots,d\}$ when looking at the $j^{th}$ coordinate of (\ref{eq:11}) and then differentiating with respect to $v_i$ (valid as the $\eps$ regularization grants arbitrary smoothness), we have
		\[
		\partial_{v_i}\partial_{v_j} \frac{\delta E_\varepsilon}{\delta f} = \partial_{v_i}\lambda^{(2)}(v,v_*)(v_j-v_{*j}) + \lambda^{(2)}(v,v_*)\delta_{ij}\,.
		\]
		Set $v=v_*$ in the above equation to deduce
		\begin{equation}\label{eq:12}
		\partial_{v_i}\partial_{v_j} \frac{\delta E_\varepsilon}{\delta f} = \lambda^{(2)}(v,v)\delta_{ij}\,.
		\end{equation}
		Differentiating (\ref{eq:12}) again with respect to $v_k$ for $k \in \{1,\dots,d\}$ yields
		\[
		\partial_{v_k}\partial_{v_i}\partial_{v_j} \frac{\delta E_\varepsilon}{\delta f} = \partial_{v_k}\lambda^{(2)}(v,v)\delta_{ij}\,.
		\]
		The partial derivatives on the left hand side of the above may be freely permuted with no change to the expression. More interesting is the permutation of the associated indices $i,j,$ and $k$ on the right hand side. One instance of this is the following identity
		$
		\partial_{v_k}\lambda^{(2)}(v,v)\delta_{ij} = \partial_{v_i}\lambda^{(2)}(v,v)\delta_{kj}.
		$ 
		For arbitrary indices $k\in\{1,\dots,d\}$, simply take $i=j\in\{1,\dots,d\}\setminus\{k\}$ and one sees from before that
		$
		\partial_{v_k}\lambda^{(2)}(v,v)=0.
		$
		Since $k\in\{1,\dots,d\}$ was arbitrary, this implies that $\lambda^{(2)}(v,v)$ is actually a constant which we shall refer to as $\lambda^{(2)}$. Equipped with this information, integrating (\ref{eq:12}) twice confirms the claim of the lemma that the first variation of the entropy is a quadratic polynomial given by \eqref{eq:1stvarquad} for some constants $\lambda^{(1)}\in \mathbb{R}^d$ and $\lambda^{(0)}\in \mathbb{R}$.
	\end{proof}
	
	Our next step is to show that if $f$ satisfies equation~\eqref{eq:1stvarquad} then it is a Maxwellian with explicitly computable mass, momentum, and energy.
	
	\begin{lemma}\label{lem:ker=Gauss}
		If $f\in L^1_+(\RR^d)\setminus\{0\}$ satisfies the following equation
		\[
		\frac{\delta E_\varepsilon}{\delta f} = \lambda^{(0)} + \lambda^{(1)}\cdot v + \frac{\lambda^{(2)}}{2}|v|^2\,,
		\]
		then it must be a Maxwellian, $f(v) = \mathcal{M}_{\rho,u,T}(v)$. We can deduce a restriction on $\lambda^{(2)}$, specifically, that $\eps|\lambda^{(2)}|<1$. Furthermore, the mass, momentum, and energy explicitly depend on $\varepsilon, \lambda^{(0)},\lambda^{(1)}$, and $\lambda^{(2)}$ in the following way:
		\begin{equation}
		\label{eq:fconstants}\left\{
		\begin{array}{ll}
		\rho 	&= \left(\frac{2\pi}{|\lambda^{(2)}|}\right)^\frac{d}{2}\exp\left\{
		\lambda^{(0)} + \frac{\varepsilon |\lambda^{(2)}|d}{2} - \frac{\varepsilon |\lambda^{(1)}|^2}{2(1-\varepsilon |\lambda^{(2)}|)} + \frac{|\lambda^{(1)}|^2}{2|\lambda^{(2)}|(1-\varepsilon |\lambda^{(2)}|)}
		\right\}	\\
		u 	&= \frac{\lambda^{(1)}}{|\lambda^{(2)|}} \\
		T 	&= \frac{1}{|\lambda^{(2)}|} - \varepsilon 
		\end{array}
		\right.\,.
		\end{equation}
	\end{lemma}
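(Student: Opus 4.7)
The strategy is to reduce the functional equation $\frac{\delta E_\varepsilon}{\delta f} = \lambda^{(0)} + \lambda^{(1)}\cdot v + \tfrac{\lambda^{(2)}}{2}|v|^2$ to an explicit identification of $f*\psi_\varepsilon$, and then to $f$ itself, by deconvolving with the Gaussian $\psi_\varepsilon$ twice. Using the formula~\eqref{eq:1stvarEeps}, the hypothesis rewrites as $\psi_\varepsilon*\log(f*\psi_\varepsilon) = P_2(v)$, where $P_2$ denotes the given quadratic polynomial. Because $\widehat{\psi_\varepsilon}(\xi)=e^{-\varepsilon|\xi|^2/2}$ is everywhere positive, convolution with $\psi_\varepsilon$ is injective on the space of tempered distributions; since $\widehat{P_2}$ is supported at the origin, it follows that $\log(f*\psi_\varepsilon)$ must itself be a (tempered) polynomial supported at the origin after Fourier transform, hence a polynomial of degree at most $2$.

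Writing the ansatz $\log(f*\psi_\varepsilon)(v) = a_0 + a_1\cdot v + \tfrac{a_2}{2}|v|^2$ and computing directly $\psi_\varepsilon*(a_0 + a_1\cdot v + \tfrac{a_2}{2}|v|^2) = (a_0 + \tfrac{a_2\varepsilon d}{2}) + a_1\cdot v + \tfrac{a_2}{2}|v|^2$, I match coefficients to obtain $a_2=\lambda^{(2)}$, $a_1=\lambda^{(1)}$, and $a_0 = \lambda^{(0)} - \tfrac{\lambda^{(2)}\varepsilon d}{2}$. Since $f\in L^1_+\setminus\{0\}$ forces $f*\psi_\varepsilon$ to be integrable, necessarily $a_2 < 0$, i.e., $\lambda^{(2)} = -|\lambda^{(2)}|$. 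Completing the square then identifies $f*\psi_\varepsilon$ as a Gaussian with mean $u:=\lambda^{(1)}/|\lambda^{(2)}|$ and variance parameter $1/|\lambda^{(2)}|$.

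For the second deconvolution, I pass to the Fourier side: since $\widehat{f*\psi_\varepsilon} = \widehat{f}\cdot \widehat{\psi_\varepsilon}$ and both $\widehat{f*\psi_\varepsilon}$ and $\widehat{\psi_\varepsilon}$ are explicit positive Gaussian factors, I solve for $\widehat{f}$ as a Gaussian with variance $T = 1/|\lambda^{(2)}| - \varepsilon$. For $\widehat{f}$ to be (the Fourier transform of) a nonnegative integrable function, $T$ must be strictly positive, yielding the constraint $\varepsilon|\lambda^{(2)}|<1$. Inverting gives $f(v) = \mathcal M_{\rho,u,T}(v)$ with $u$ and $T$ as above.

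Finally, the mass $\rho$ is read off from $\int f\,\rd v = \int f*\psi_\varepsilon\,\rd v$, since $\psi_\varepsilon$ has unit mass. Using the Gaussian form of $f*\psi_\varepsilon$ computed above, a standard Gaussian integral (complete the square in $\lambda^{(1)}\cdot v - \tfrac{|\lambda^{(2)}|}{2}|v|^2$) gives $\rho = (2\pi/|\lambda^{(2)}|)^{d/2}\exp(\lambda^{(0)} + \tfrac{|\lambda^{(2)}|\varepsilon d}{2} + \tfrac{|\lambda^{(1)}|^2}{2|\lambda^{(2)}|})$, and a brief algebraic rearrangement $\tfrac{|\lambda^{(1)}|^2}{2|\lambda^{(2)}|} = -\tfrac{\varepsilon|\lambda^{(1)}|^2}{2(1-\varepsilon|\lambda^{(2)}|)} + \tfrac{|\lambda^{(1)}|^2}{2|\lambda^{(2)}|(1-\varepsilon|\lambda^{(2)}|)}$ puts this in the form~\eqref{eq:fconstants}. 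The main technical obstacle is the first deconvolution step: one needs to argue, in an appropriate distributional class containing $\log(f*\psi_\varepsilon)$ (which by~\eqref{eq:logquad} grows at most quadratically and is therefore tempered), that $\psi_\varepsilon$-convolution is injective and that the preimage of a polynomial is a polynomial — this is precisely where the Gaussian structure of the mollifier, with its nowhere-vanishing Fourier transform, is essential.
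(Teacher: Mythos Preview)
Your proof is correct and follows essentially the same route as the paper: a double Gaussian deconvolution, first to identify $\log(f*\psi_\varepsilon)$ as a quadratic, then to recover $f$ itself, with the sign constraints on $\lambda^{(2)}$ and $1-\varepsilon|\lambda^{(2)}|$ coming from integrability/decay of $f*\psi_\varepsilon$ and $\widehat f$ respectively. Your bookkeeping is marginally cleaner in two places --- matching coefficients via the direct computation of $\psi_\varepsilon*(\text{quadratic})$ rather than manipulating $\delta_0,\nabla\delta_0,\Delta\delta_0$ on the Fourier side, and reading off $\rho$ from $\int f = \int f*\psi_\varepsilon$ --- but these are stylistic; the argument is the paper's.
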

	\begin{proof}
		We iteratively Fourier transform equation~(\ref{eq:1stvarquad}) recalling in particular the convolution and inversion theorems (especially that Maxwellians are Fourier transformed to Maxwellians) to deduce the identities
		\begin{align*}
		\psi_\varepsilon*\log(f*\psi_\varepsilon)  	&= \lambda^{(0)} + \lambda^{(1)}\cdot v + \frac{\lambda^{(2)}}{2}|v|^2 	\,,\\
		\mathcal{F}\{\log(f*\psi_\varepsilon)\} 	&= (2\pi\varepsilon)^\frac{d}{2}\frac{1}{\psi_\frac{1}{\varepsilon}}\left(\lambda^{(0)}\delta_0 + i \lambda^{(1)}\cdot \nabla \delta_0 - \frac{\lambda^{(2)}}{2} \Delta\delta_0\right) \,, 	
		\end{align*}
		and 
		\begin{equation}
		\label{eq:auxpoly}
		\log(f*\psi_\varepsilon) = \lambda^{(0)} - \frac{\lambda^{(2)}\varepsilon d}{2} + \lambda^{(1)}\cdot v + \frac{\lambda^{(2)}}{2}|v|^2  	\,.
		\end{equation}
		At this point, we remark that the sign of $\lambda^{(2)}$ can be deduced. Specifically, we claim that $\lambda^{(2)} < 0$. The significance of this is to ensure that the exponential of the right-hand side of~\eqref{eq:auxpoly} is integrable, and therefore Fourier transformable. Firstly, $\lambda^{(2)} \leq 0$ because the Dominated Convergence Theorem yields $\lim_{|v|\to \infty} f*\psi_\varepsilon(v) = 0$. This means that the right-hand side of~\eqref{eq:auxpoly} must decrease to $-\infty$ in the limit $|v|\to \infty$. Therefore, looking at the leading order contribution of the right-hand side of~\eqref{eq:auxpoly} gives $\lambda^{(2)}\leq 0$. Suppose for a contradiction that $\lambda^{(2)} = 0$, so the leading order contribution sending the right-hand side of~\eqref{eq:auxpoly} to $-\infty$ is $\lambda^{(1)}\cdot v$. In other words, $\lambda^{(1)}\cdot v$ must converge to $-\infty$ whenever $|v|\to \infty$. However, we can always choose a sequence $v_n =n \frac{\lambda^{(1)}}{|\lambda^{(1)}|}$ for $n\in\mathbb{N}$ which is colinear to $\lambda^{(1)}$ satisfying $\lambda^{(1)}\cdot v_n \to \infty$ and $|v_n| \to \infty$ as $n\to \infty$.
		
		Taking exponentials of both sides of~\eqref{eq:auxpoly}, we have
		$$
		f*\psi_\varepsilon(v) 	= \exp\left\{\lambda^{(0)} - \frac{|\lambda^{(1)}|^2}{2\lambda^{(2)}} - \frac{\lambda^{(2)}\varepsilon d}{2}\right\}\exp\left\{\frac{\lambda^{(2)}}{2}\left|v+\frac{\lambda^{(1)}}{\lambda^{(2)}}\right|^2\right\} \,,
		$$
		and one more Fourier transform (valid by the sign of $\lambda^{(2)}< 0$ discussed in the previous paragraph) leads to
		$$
		\mathcal{F}\{f\}(\xi) 	= (2\pi)^d(1-\varepsilon|\lambda^{(2)}|)^{-\frac{d}{2}} \exp\left\{\lambda^{(0)} + \frac{|\lambda^{(2)}|\varepsilon d}{2} - \frac{\varepsilon |\lambda^{(1)}|^2}{2(1-\varepsilon|\lambda^{(2)}|)}\right\}\mathcal{M}_{\left(1,-\frac{i\lambda^{(1)}}{1-\varepsilon|\lambda^{(2)}|},\frac{|\lambda^{(2)}|}{1-\varepsilon|\lambda^{(2)}|}\right)}(\xi)\,,
		$$
		after tedious algebra to collect terms. Here, we are using the convention that, for vectors $x,y\in\mathbb{R}^d$, $|x+iy|^2 := |x|^2 + 2ix\cdot y - |y|^2$. By the Riemann-Lebesgue lemma, we know that $|\mathcal{F}\{f\}(\xi)|\to 0$ as $|\xi|\to\infty$. With the expression for $\mathcal{F}\{f\}$ above, this means that the variance of $\mathcal{M}$ (the third parameter in the subscript) must be strictly positive. Hence, $1- \eps |\lambda^{(2)}|>0$. One final Fourier inversion gives an expression for $f$ as
		\begin{equation*}
		f(v) = \left( \frac{2\pi}{|\lambda^{(2)}|}\right)^\frac{d}{2}\exp\left\{
		\lambda^{(0)} + \frac{\varepsilon|\lambda^{(2)}|d}{2} - \frac{\varepsilon|\lambda^{(1)}|^2}{2(1-\varepsilon|\lambda^{(2)}|)}+ \frac{|\lambda^{(1)}|^2}{2|\lambda^{(2)}|(1-\varepsilon|\lambda^{(2)}|)}\right\}\mathcal{M}_{1,\frac{\lambda^{(1)}}{|\lambda^{(2)}|},\frac{1}{|\lambda^{(2)}|} - \varepsilon
		}(v)\,.
		\end{equation*}
		Reading off the constants, one confirms~\eqref{eq:fconstants}. Note that in the determination of $\rho,u,T$ in equation~\eqref{eq:fconstants}, we have a one-to-one correspondence between $(\rho,u,T)$ and $(\lambda^{(0)},\lambda^{(1)},\lambda^{(2)})$. Indeed, $\lambda^{(2)}$ is determined from $T$ which then gives $\lambda^{(1)}$ in the equation for $u$. Finally, $\lambda^{(0)}$ is determined from the equation for $\rho$.
	\end{proof}
	The previous lemmas give the following equivalence.
	\begin{theorem}
		$f$ is a stationary state of the regularized Landau equation \eqref{Landaureg} if and only if $f$ is a Maxwellian with parameters given by~\eqref{eq:fconstants} depending on the quadratic polynomial in equation~\eqref{eq:1stvarquad}.
	\end{theorem}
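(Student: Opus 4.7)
The plan is to assemble the two preceding lemmas for the ``only if'' direction and to perform a direct computation for the ``if'' direction. For the forward implication, suppose $f$ is a stationary state in the sense of Definition~\ref{defn:station}. Then Lemma~\ref{lem:1stvarquad} identifies $\frac{\delta E_\eps}{\delta f}$ as a quadratic polynomial of the form~\eqref{eq:1stvarquad}, and Lemma~\ref{lem:ker=Gauss} converts this into the claim that $f$ is the Maxwellian $\equilibrium_{\rho,u,T}$ with parameters determined by~\eqref{eq:fconstants}. No new work is required.

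For the reverse implication, I would fix an admissible triple $(\lambda^{(0)},\lambda^{(1)},\lambda^{(2)})$ with $\lambda^{(2)}<0$ and $\eps|\lambda^{(2)}|<1$, set $(\rho,u,T)$ according to~\eqref{eq:fconstants}, and verify that $f=\equilibrium_{\rho,u,T}$ satisfies the stationarity condition by a direct Gaussian computation. Since the convolution of two Gaussians is a Gaussian, $f*\psi_\eps$ is a Maxwellian with variance $(T+\eps)I$, so $\log(f*\psi_\eps)$ is a quadratic polynomial in $v$. Convolving a quadratic polynomial once more with the even, mean-zero probability density $\psi_\eps$ (of variance $\eps I$) only shifts its constant term, so $\frac{\delta E_\eps}{\delta f}=\psi_\eps*\log(f*\psi_\eps)$ is itself a quadratic polynomial whose coefficients match $(\lambda^{(0)},\lambda^{(1)},\lambda^{(2)})$ by construction.

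Differentiating yields $\nabla_v\frac{\delta E_\eps}{\delta f}(v) = \lambda^{(1)} + \lambda^{(2)} v$, so $B_{v,v_*}^\eps = \lambda^{(2)}(v-v_*)$ lies pointwise in $\ker A(v-v_*)$. The integrand of the stationarity condition in Definition~\ref{defn:station} therefore contains the factor $A(v-v_*)(v-v_*)=0$ and vanishes identically, proving stationarity against every $\phi\in C_0^\infty(\RR^d)$.

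The main obstacle is verifying that the parameter correspondence~\eqref{eq:fconstants} is genuinely bijective on its admissible domain, so that every Maxwellian $\equilibrium_{\rho,u,T}$ with $T>0$ arises from a unique quadratic first variation whose coefficients regenerate the same Maxwellian; this bijectivity has already been recorded in the closing paragraph of the proof of Lemma~\ref{lem:ker=Gauss}. Combining this observation with the two lemmas and the explicit Gaussian computation above completes the proof.
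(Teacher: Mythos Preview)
Your proposal is correct and follows essentially the same approach as the paper: the forward direction is exactly the combination of Lemmas~\ref{lem:1stvarquad} and~\ref{lem:ker=Gauss}, and the reverse direction is the Gaussian computation of $\frac{\delta E_\eps}{\delta f}=\psi_\eps*\log(f*\psi_\eps)$ that the paper only alludes to but you have written out explicitly. Your additional remarks on bijectivity of~\eqref{eq:fconstants} and on $B_{v,v_*}^\eps\in\ker A(v-v_*)$ simply make precise what the paper leaves implicit.
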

	\begin{proof}
		$(\implies)$ This direction combines Lemmas~\ref{lem:1stvarquad} and~\ref{lem:ker=Gauss}.
		
		\noindent $(\impliedby)$ This direction is a computation of $\frac{\delta E_\eps}{\delta f} = \psi_\eps * \log (f*\psi_\eps)$ when $f$ is a Maxwellian.
	\end{proof}
	
	\begin{remark}
		An alternative regularization for the entropy is
		\begin{equation}\label{alter}
		\tilde{E}_\varepsilon(f) = \int_{\mathbb{R}^d} f\log (f*\psi_\varepsilon)\,\rd{v}\,.
		\end{equation}
		Lemma~\ref{lem:1stvarquad} is still true with this alternative regularized entropy where the first variation of $\tilde{E}_\eps$ and its gradient are given by, see \cite{carrillo2017blob},
		\begin{equation}\label{altervar}
		\frac{\delta \tilde{E}_\eps}{\delta f } = \log(f \ast \psi_\eps) + \left( \frac{f}{f \ast \psi_\eps} \right) \ast \psi_\eps\,, \quad
		\nabla_v \frac{\delta \tilde{E}_\eps}{\delta f } = \frac{ f \ast \dvn \psi_\eps}{f \ast \psi_\eps } + \left( \frac{f}{ f \ast \psi_\eps } \right)\ast \dvn \psi_\eps \,.
		\end{equation}
		However, the characterization result of Lemma~\ref{lem:ker=Gauss} as a Maxwellian is not true, even if one might expect the existence and uniqueness of a stationary state being the conserved quantities fixed. 
	\end{remark}
	
	
	\section{A particle method for the homogeneous Landau equation}
	
	The main idea is analogous to the recent work \cite{carrillo2017blob} for aggregation-diffusion equations. In fact, the regularized Landau equation \eqref{Landaureg} can be viewed as a convection in $v$ with velocity field given by \eqref{QQe}, and thus giving access to a particle formulation. More specifically,  denote 
	\begin{equation} \label{fN}
	f^N(t,v)= \sum_{i = 1}^Nw_i \delta (v - v_i(t))\,,
	\end{equation} 
	with $N$ being the total number of particles, $v_i(t)$ the velocity of particle $i$, and $w_i$ the weight of particle $i$. Plugging \eqref{fN} as a distributional solution to \eqref{Landaureg}, we obtain that the evolution for the particle velocities $v_i(t)$, $1\leq i \leq N$ is given by
	\begin{align} \label{vt2}
	\frac{d v_i(t)}{dt} & =  U_\eps (f^N) (t, v_i(t)) =  -  \sum_j w_jA(v_i - v_j) \left[  \nabla \frac{\delta E^N_\eps}{\delta f}(v_i) - \nabla \frac{\delta E^N_\eps}{\delta f}(v_j)  \right] \nonumber
	\\ & = -  \sum_j w_j A(v_i - v_j) \left\{   \int_{\RR^d}  \nabla \psi_\eps (v_i - v) \log \left( \sum_k w_k\psi_{\varepsilon}( v-v_k) \right) \,\rd  v \right. \nonumber
	\\ & \hspace{3.5cm}  - \left.   \int_{\RR^d}  \nabla \psi_\eps (v_j -  v) \log \left( \sum_k w_k\psi_{\varepsilon}( v-v_k) \right) \,\rd v \right\}\,,
	\end{align}
	with $
	\frac{\delta E^N_\eps}{\delta f}:= \psi_\varepsilon*\log (f^N*\psi_\varepsilon)
	$
	and therefore, 
	\begin{equation} \label{nabladeltaE}
	\nabla \frac{\delta E^N_\eps}{\delta f}(v_i) = \int_{\RR^d} \nabla \psi_\eps(v_i - v) \log \left(\sum_k w_k \psi_\eps (v-v_k) \right) \rd v\,.
	\end{equation}
	Let us show next that the semidiscrete particle method defined by \eqref{vt2} leads to a numerical particle approximation $f^N$ of the solution to the regularized Landau equation \eqref{Landaureg} conserving mass, momentum and energy and enjoying the regularized entropy dissipation \eqref{entdis}.
	
	\begin{theorem} \label{thm10}
		The semidiscrete particle method \eqref{vt2} satisfies the following properties:
		\begin{itemize}
			\item[1)] Conservation of mass, momentum, and energy: $\frac{d}{dt} \sum_{i=1}^N w_i \phi(v_i) = 0$ for $\phi(v_i) = 1, v_i, |v_i|^2$. 
			\item[2)] Dissipation of entropy: let 
			\begin{equation} \label{EN23}
			E^N_\eps = E_\eps(f^N) = \int_{\RR^d} (f^N \ast \psi_\eps) \log(f^N \ast \psi_\eps) \rd v
			\end{equation}
			be the discrete entropy, then $\frac{d}{dt} E^N_\eps = - D^N_\eps \leq 0$, where 
			\[
			D_\eps^N = \frac{1}{2}\sum_{i,j} w_{i} w_j \left( \nabla \frac{\delta E_\eps^N}{\delta f} (v_i) - \nabla \frac{\delta E_\eps^N}{\delta f} (v_j)\right)  \cdot A(v_i - v_j) \left( \nabla \frac{\delta E_\eps^N}{\delta f} (v_i) - \nabla \frac{\delta E_\eps^N}{\delta f} (v_j)\right)\,.
			\]
		\end{itemize}
	\end{theorem}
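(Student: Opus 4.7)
The plan is to mirror at the discrete particle level the continuous computations carried out in Lemma~\ref{lem:contest} and Lemma~\ref{lem:Enonincrease}. The key observation that makes everything rigorous (no distributional subtleties) is that although $f^N=\sum_i w_i\delta_{v_i}$ is a sum of atoms, the regularized density $h(t,v):=(f^N\ast\psi_\eps)(v)=\sum_i w_i\psi_\eps(v-v_i(t))$ is a smooth strictly positive function, so $\log h$ and the integrals appearing in $\nabla\frac{\delta E_\eps^N}{\delta f}(v_i)$ given by \eqref{nabladeltaE} are perfectly well defined. Throughout, I will abbreviate $g_i:=\nabla\frac{\delta E_\eps^N}{\delta f}(v_i)$ so that \eqref{vt2} reads $\dot v_i=-\sum_j w_j A(v_i-v_j)(g_i-g_j)$.

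For part (1) I would differentiate the conserved quantity and plug in the ODE:
\[
\frac{d}{dt}\sum_i w_i\phi(v_i)=\sum_i w_i\nabla\phi(v_i)\cdot\dot v_i=-\sum_{i,j}w_iw_j\,\nabla\phi(v_i)^\top A(v_i-v_j)(g_i-g_j).
\]
Then I would use the $i\leftrightarrow j$ swap together with the evenness $A(-z)=A(z)$ and symmetry of the matrix $A$ to rewrite the double sum as
\[
-\tfrac12\sum_{i,j}w_iw_j\bigl(\nabla\phi(v_i)-\nabla\phi(v_j)\bigr)^\top A(v_i-v_j)(g_i-g_j),
\]
which is the discrete analogue of \eqref{eq:diffact}. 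For $\phi\equiv 1$ and for each component $\phi(v)=v_k$ the difference $\nabla\phi(v_i)-\nabla\phi(v_j)$ vanishes, and for $\phi(v)=|v|^2$ it equals $2(v_i-v_j)$, which lies in $\ker A(v_i-v_j)$ since $A(z)z=0$. In each case the sum is zero, establishing conservation of mass, momentum, and energy.

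For part (2) I would differentiate $E_\eps^N$ through $h$:
\[
\frac{d}{dt}E_\eps^N=\int_{\RR^d}\partial_t h\,(\log h+1)\,\rd v,
\]
where by the chain rule $\partial_t h(v)=-\sum_i w_i\dot v_i\cdot\nabla\psi_\eps(v-v_i)$. The $+1$ contribution vanishes because $\int \partial_t h\,\rd v=\partial_t\!\int h\,\rd v=0$ by the mass conservation proved in part (1). Using the evenness of $\psi_\eps$ (so $\nabla\psi_\eps(v-v_i)=-\nabla\psi_\eps(v_i-v)$) together with formula \eqref{nabladeltaE}, the remaining integral collapses to $-g_i$, giving
\[
\frac{d}{dt}E_\eps^N=\sum_i w_i\dot v_i\cdot g_i=-\sum_{i,j}w_iw_j\,g_i^\top A(v_i-v_j)(g_i-g_j).
\]
The same $i\leftrightarrow j$ symmetrization as above then yields $\frac{d}{dt}E_\eps^N=-D_\eps^N$, and positive semidefiniteness of $A(v_i-v_j)$ makes each summand of $D_\eps^N$ nonnegative. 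The only real bookkeeping care needed is in this last symmetrization step (to handle the asymmetry between the outer index $i$ attached to $g_i$ alone and the inner index $j$); otherwise both parts are a straightforward transcription of the continuous identities into sums, exploiting that the mollification makes every quantity classical.
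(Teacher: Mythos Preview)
Your proposal is correct and follows essentially the same route as the paper: differentiate, insert the ODE \eqref{vt2}, and symmetrize in $i\leftrightarrow j$ using $A(-z)=A(z)$ for both parts, with $A(z)z=0$ killing the energy term and positive semidefiniteness of $A$ giving the sign of $D_\eps^N$. The only cosmetic difference is that the paper differentiates $h\log h$ via the product rule into two pieces $I_1+I_2$ and observes $I_2=0$ from $\int\psi_\eps=1$, whereas you write $\int\partial_t h(\log h+1)$ and discard the $+1$ via mass conservation---these are the same computation.
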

	\begin{proof}
		First, we notice from \eqref{vt2} that
		\begin{align*}
		\frac{d}{dt} \sum_i w_i \phi (v_i) &= \sum_i  w_i \nabla \phi(v_i) \cdot U_\eps (f^N) (t,v_i(t)) \nonumber
		\\ & =  - \sum_{i,j} w_i w_j A(v_i - v_j) \left( \nabla \frac{\delta E_\eps^N}{\delta f} (v_i) - \nabla \frac{\delta E_\eps^N}{\delta f} (v_j)\right) \cdot \nabla \phi(v_i)   \nonumber
		\\ & = -\frac{1}{2} \sum_{i,j} w_i w_j A(v_i - v_j) \left( \nabla \frac{\delta E_\eps^N}{\delta f} (v_i) - \nabla \frac{\delta E_\eps^N}{\delta f} (v_j)\right) \cdot  (\nabla \phi(v_i)   - \nabla \phi(v_j))
		\end{align*}
		which vanishes with $\phi(v) = 1, v, |v|^2$. Therefore, mass, momentum, and energy are preserved. Next, using \eqref{fN}, we rewrite \eqref{EN23} as
		\[
		E_\eps^N = \int_{\RR^d} \left( \sum_i w_i \psi_\eps (v-v_i(t)) \right)  \log \left( \sum_k w_k \psi_\eps (v-v_k(t) )\right) \rd v\,,
		\]
		then 
		\begin{align*}
		\frac{d}{dt} E_\eps^N =& \int_{\RR^d}  \sum_i w_i \nabla \psi_\eps (v-v_i(t)) \frac{dv_i(t)}{dt} \log \left( \sum_k w_k \psi_\eps (v-v_k(t))\right) \rd v 
		\\ & +  \int_{\RR^d}   \left( \sum_i w_i \psi_\eps (v-v_i(t)) \right) \frac{\sum_k  w_k \nabla \psi_\eps (v-v_k(t)) \frac{d v_k(t)}{dt}}{  \sum_k w_k \psi_\eps (v-v_k(t)) }\, \rd v
		\\  =: & I_1 + I_2\,.
		\end{align*}
		Note that $I_2$ can be simplified to
		\begin{align*}
		I_2 = \int_{\RR^d} \sum_k  w_k \nabla \psi_\eps (v-v_k(t)) \frac{d v_k(t)}{dt} \,\rd v = -\frac{d}{dt} \sum_k w_k \int_{\RR^d} \psi_\eps(v-v_k(t))\, \rd v = 0\,,
		\end{align*}
		thanks to the fact that $\int_{\RR^d} \psi_\eps(v-v_k(t)) \rd v  =1$. 
		By virtue of \eqref{nabladeltaE}, $I_1$ has the following estimate
		\begin{align*}
		I_1 = \sum_i w_i \left(\int_{\mathbb{R}^d} \nabla \psi_\eps (v-v_i(t)) \log \left( \sum_k w_k \psi_\eps (v-v_k(t) )\right)\, \rd v\right) \frac{ d v_i}{ dt }  
		= \sum_i w_i \nabla \frac{\delta E_\eps^N}{\delta f} (v_i) \frac{d v_i}{dt}\,.
		\end{align*}
		Then using \eqref{vt2}, it becomes
		\begin{align*}
		I_1  & = \sum_i w_i \nabla \frac{\delta E_\eps^N}{\delta f} (v_i) \left[    -\sum_j w_j A(v_i - v_j) \left( \nabla \frac{\delta E_\eps^N}{\delta f} (v_i) - \nabla \frac{\delta E_\eps^N}{\delta f} (v_j)   \right)  \right]
		\\ &  = - \frac{1}{2}\sum_{i,j} w_{i} w_j \left( \nabla \frac{\delta E_\eps^N}{\delta f} (v_i) - \nabla \frac{\delta E_\eps^N}{\delta f} (v_j)\right)  \cdot A(v_i - v_j) \left( \nabla \frac{\delta E_\eps^N}{\delta f} (v_i) - \nabla \frac{\delta E_\eps^N}{\delta f} (v_j)\right) \leq 0\,,
		\end{align*}
		and therefore, the entropy dissipation follows. 
	\end{proof}
	
		\begin{remark}
			A natural question is how to deal in practice with the cutoff of the initial data in a bounded domain. We will restrict to a square domain $[-L,L]^d$ with $L>0$ as the computational domain due to physical considerations of the Landau equation since the variable is a velocity vector. Notice that the regularized equation \eqref{Landaureg} has the structure of a nonlineary continuity equation in the velocity variable for the unknown density function $f$ with velocity field $U_\eps(f)$. Such continuity equations are naturally posed in a bounded domain with no-flux boundary contions $U_\eps(f)\cdot\nu=0$ at the boundary of the domain where $\nu$ is the outwards unit normal vector to the boundary. This no-flux boundary condition inmmediately leads to mass conservation. In particular, we could solve \eqref{Landaureg} in the square domain $[-L,L]^d$ with no-flux boundary conditions. The particle approximation in \eqref{fN} remains valid and particles follow the same paths as written in \eqref{vt2} as long as the particles do not touch the boundary of the domain. When touching the boundary, particles need to be reflected with respect to the normal direction to impose the no-flux boundary conditions. In the case of the regularized Landau equation, this is not physical since boundaries in the velocity variable do not make sense and because the conservations of mean velocity and energy would be lost when particles are reflected at the boundary of the velocity domain. Therefore, the solutions constructed from particle approximations remain valid as an approximation of the Landau equation as soon as the particles do not touch the boundary. In practice, we initialize with particles chosen in a square domain $[-L,L]^d$ and check that particles do not escape from the domain for their time span to choose the right initialization domain size $L>0$ for our initial data.
		\end{remark}
	
	In practical implementation of particle methods, the update of particle velocity via \eqref{vt2} will not be computed exactly, but with the integral replaced by a quadrature rule. Therefore, we need to introduce a discrete-in-velocity particle method. The computational domain in any dimension is the square domain $[-L,L]^d$ with $L>0$. The mesh size is defined by $h=2L/n$ and $N=n^d$ is the total chosen number of particles. Let us denote the squares of the mesh as $Q_i$ with $i=1,\dots,n^d$. We will always initialize our particle method by projecting the mass of the initial data on the computational domain to a sum of Dirac Deltas located at the center of each $Q_i$ with mass given by the mass of the initial data in $Q_i$, that is
	$$
	\bar f^N(0,v) := \sum_{i = 1}^N w_i \delta(v-\bar{v}_i(0))\,,
	\quad \mbox{with } \bar{v}_i(0)=v_i^c  \mbox{ and }  w_i=f_0(v_i^c)h^d\,,
	$$
	with $v_i^c$ denoting the center of the square $Q_i$. Now, we can introduce the discrete in velocity particle method as
	\[
	\bar{f}^N = \sum_{i=1}^N w_i \delta (v-\bar{v}_i(t)) 
	\]
	where $\bar{v}_i(t)$ satisfies
	\begin{align} \label{vt3}
	\frac{d \bar v_i(t)}{dt}  =& -  \sum_j w_j A(\bar v_i - \bar v_j) \left\{   \sum_{l} h^d \nabla \psi_\eps ( \bar v_i - v_l^c ) \log \left( \sum_k w_k\psi_{\varepsilon}( v_l^c - \bar v_k) \right)  \right. \nonumber
	\\ & \hspace{3.2cm}  - \left.   \sum_l  h^d \nabla \psi_\eps (\bar v_j -  v_l^c) \log \left( \sum_k w_k\psi_{\varepsilon}( v_l^c -\bar v_k) \right)  \right\} \nonumber
	\\ =:&  \,-  \sum_j w_jA(\bar v_i - \bar v_j) \left[   \bar F_\eps^N (\bar v_i) - \bar F_\eps^N ( \bar v_j)  \right] =: \,\bar U_\eps (\bar f^N) (t, \bar v_i(t))\,.
	\end{align}
	Here, the function 
	\begin{equation} \label{nabladeltaEdd}
	\bar{F}_\eps^N(\bar v_i) = \sum_l h^d \nabla \psi_\eps(\bar v_i - v_l^c) \log \left(\sum_k w_k \psi_\eps (v_l^c- \bar v_k) \right) 
	\end{equation}
	is a discrete analogue of the first variation of the entropy functional \eqref{nabladeltaE}. One can also define the fully discrete regularized entropy as 
	\begin{equation}\label{discreteentropy}
	\bar E_\eps^N  = \sum_l h^d \left( \sum_i w_i \psi_\eps(v_l^c - \bar v_i ) \right) \log\left(  \sum_k w_k \psi_\eps(v_l^c - \bar v_k )\right) \,.
	\end{equation}
	Then we show that at this fully discrete level, some properties in Theorem \ref{thm10} are inherited.  
	
	\begin{theorem}
		The discrete-in-velocity particle method \eqref{vt3} satisfies the conservation of mass, momentum, and energy. Moreover, the discrete entropy \eqref{discreteentropy} almost decays in time, that is, 
		$$
		\bar E^N_\eps (t) - \bar E^N_\eps(0) = - \int_0^t\bar D^N_\eps ds +O(h^2)\,, 
		$$
		where 
			\[
			\bar D_\eps^N = \frac{1}{2}\sum_{i,j} w_{i} w_j \left( \bar{F}_\eps^N ( \bar v_i) - \nabla \bar{F}_\eps^N ( \bar v_j)\right)  \cdot A( \bar v_i -  \bar v_j) \left( \bar{F}_\eps^N ( \bar v_i) - \bar{F}_\eps^N ( \bar v_j)\right)\geq 0\,.
			\]
	\end{theorem}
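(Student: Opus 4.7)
The plan is to mirror the proof of Theorem~\ref{thm10}: the structural calculation (symmetrizing $i\leftrightarrow j$ to recognize the bilinear form with $A(\bar v_i - \bar v_j)$) is identical at the discrete level, so the only real work is to identify the single place where replacing an integral over $v$ by a midpoint quadrature sum over the grid $\{v_l^c\}$ costs something.

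For the conservation laws, I would differentiate $\sum_i w_i \phi(\bar v_i)$ in time, substitute \eqref{vt3}, and symmetrize to obtain
\[
\frac{d}{dt}\sum_i w_i \phi(\bar v_i) = -\half\sum_{i,j} w_i w_j [\nabla \phi(\bar v_i) - \nabla \phi(\bar v_j)] \cdot A(\bar v_i - \bar v_j) [\bar F_\eps^N(\bar v_i) - \bar F_\eps^N(\bar v_j)]\,.
\]
Exactly as in Theorem~\ref{thm10}, $\phi(v)=1$ and $\phi(v)=v_k$ annihilate the first bracket, while $\phi(v)=|v|^2$ produces $2(\bar v_i - \bar v_j) \in \ker A(\bar v_i - \bar v_j)$. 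This step is exact: no quadrature appears and no $O(h^2)$ correction is needed.

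For the entropy identity, I would differentiate \eqref{discreteentropy} using $\frac{d}{dt}(P_l \log P_l) = (\log P_l + 1)\frac{dP_l}{dt}$ with $P_l := \sum_k w_k \psi_\eps(v_l^c - \bar v_k)$, and split into the same $I_1 + I_2$ structure as in the semidiscrete proof. The log-term collapses after interchanging sums and recognizing \eqref{nabladeltaEdd}:
\[
I_1 = \sum_i w_i \bar F_\eps^N(\bar v_i) \cdot \frac{d\bar v_i}{dt}\,,
\]
and substituting \eqref{vt3} together with the same $i\leftrightarrow j$ symmetrization yields $I_1 = -\bar D_\eps^N$, which is $\leq 0$ by the semipositive-definiteness of $A$.

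The main obstacle is the remaining $I_2$ term, which at the discrete level reads
\[
I_2 = \sum_k w_k \frac{d\bar v_k}{dt} \cdot \Big(\sum_l h^d \nabla \psi_\eps(\bar v_k - v_l^c)\Big).
\]
In the semidiscrete setting, the analogous inner factor is $\int_{\RR^d}\nabla \psi_\eps(\bar v_k - v)\,\rd v$, which vanishes by integration by parts against a rapidly decaying Gaussian, giving $I_2 = 0$ exactly. At the discrete level the inner factor is instead a midpoint-rule quadrature of that vanishing integral on $[-L,L]^d$. I would close the argument by invoking the classical midpoint quadrature bound, applied to the smooth integrand $v\mapsto \nabla \psi_\eps(\bar v_k - v)$ whose $C^2$ norm is uniformly controlled by a constant depending only on $\eps$, together with the observation that the boundary truncation is exponentially small provided $L \gg \sqrt{\eps}$ (which is the domain-size assumption implicit in the previous remark). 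This delivers $\sum_l h^d \nabla \psi_\eps(\bar v_k - v_l^c) = O(h^2)$ uniformly in $k$; combined with the boundedness of $\sum_k w_k|d\bar v_k/dt|$ on finite time intervals (from the smoothness of the right-hand side of~\eqref{vt3}), this yields $I_2 = O(h^2)$. Integrating $\frac{d}{dt}\bar E_\eps^N = -\bar D_\eps^N + O(h^2)$ in time produces the stated identity.
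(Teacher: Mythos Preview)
Your proposal is correct and follows the same overall structure as the paper's proof: identical symmetrization argument for the conservation laws, the same $I_1+I_2$ splitting for the entropy, and the same reduction of $I_1$ to $-\bar D_\eps^N$ via \eqref{nabladeltaEdd} and the $i\leftrightarrow j$ symmetrization.

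The one genuine difference is in the treatment of $I_2$. You bound the inner sum $\sum_l h^d \nabla\psi_\eps(\bar v_k - v_l^c)$ directly as an $O(h^2)$ midpoint approximation of the vanishing integral $\int_{\RR^d}\nabla\psi_\eps = 0$, and then multiply by a bound on $\sum_k w_k|\tfrac{d\bar v_k}{dt}|$ before integrating in time. The paper instead observes that $I_2$ is an exact total time derivative,
\[
I_2 = \frac{d}{dt}\sum_i w_i \sum_l h^d \psi_\eps(v_l^c - \bar v_i(t)),
\]
integrates this first, and then applies the midpoint estimate $\sum_l h^d \psi_\eps(v_l^c - \bar v_i) = 1 + O(h^2)$ at the two endpoints $t$ and $0$. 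The paper's route is slightly cleaner: it avoids invoking any a priori bound on the particle velocities $|\tfrac{d\bar v_k}{dt}|$, and it yields an $O(h^2)$ remainder whose constant does not grow with $t$ (since it is a difference of two endpoint errors rather than a time-integrated one). Your route is nonetheless valid on bounded time intervals, given the smoothness of the right-hand side of \eqref{vt3} and the energy conservation that confines the particles.
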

	\begin{proof}
		Indeed, for $\phi (v) = 1, v , |v|^2$, we have
		\begin{align}
		\frac{d}{dt} \sum_i w_i \phi (\bar v_i) &= \sum_i  w_i \nabla \phi(\bar v_i) \cdot \bar U_\eps (f^N) (t,\bar v_i(t)) \nonumber
		\\ & =  - \sum_{i,j} w_i w_j A( \bar v_i -  \bar v_j) \left( \bar F_\eps^N (\bar v_i) - \bar F_\eps^N (\bar v_j)\right) \cdot \nabla \phi( \bar v_i)   \nonumber
		\\ & = -\frac{1}{2} \sum_{i,j} w_i w_j A( \bar v_i - \bar v_j) \left( \bar F_\eps^N (\bar v_i) - \bar F_\eps^N (\bar v_j)\right) \cdot  (\nabla \phi( \bar v_i)   - \nabla \phi( \bar v_j)) = 0\,, \label{conservdis}
		\end{align}
		hence the conversation of mass, momentum, and energy is guaranteed. A similar computation to the entropy dissipation in the semidiscrete level leads to
		\begin{align*}
		\frac{d}{dt} \bar E_\eps^N(t) =& \sum_l h^{d} \sum_i w_i \nabla \psi_\eps (v_l^c-\bar v_i(t)) \frac{d\bar v_i(t)}{dt} \log \left( \sum_k w_k \psi_\eps (v_l^c-\bar v_k(t))\right) 
		\\ & +  \sum_l  h^{d} \left( \sum_i w_i \psi_\eps (v_l^c-\bar v_i(t)) \right) \frac{\sum_k  w_k \nabla \psi_k (v_l^c-\bar v_k(t)) \frac{d \bar v_k(t)}{dt}}{  \sum_k w_k \psi_\eps (v_l^c-\bar v_k(t)) } 
		\\  =: & I_1 + I_2\,.
		\end{align*}
		By the definition of \eqref{nabladeltaEdd} and similarly to \eqref{conservdis}, $I_1$ can be written as 
		\begin{align*}
		I_1 = \sum_i w_i \bar{F}_\eps^N (\bar v_i) \frac{d \bar v_i}{dt}=-\frac{1}{2} \sum_{i,j} w_i w_j A( \bar v_i - \bar v_j) \left( \bar F_\eps^N (\bar v_i) - \bar F_\eps^N (\bar v_j)\right) \cdot  (\bar{F}_\eps^N( \bar v_i)   - \bar{F}_\eps^N( \bar v_j)) \leq 0\,.
		\end{align*}
		As before $I_2$ can be written as
		\begin{align*}
		I_2 &= \sum_l h^{d} \sum_i  w_i \nabla \psi_\eps (v_l^c-\bar v_i(t)) \frac{d\bar v_i(t)}{dt} = \frac{d}{dt} \sum_i w_i \sum_l h^{d} \psi_\eps(v_l^c-\bar v_i(t))\,.
		\end{align*}
		We are reduced to showing that 
			$$ 
			\sum_i w_i \sum_l h^{d} \psi_\eps(v_l^c-\bar v_i(t))= \sum_i w_i + O(h^2)
			$$
			which is true thanks to the fact that $\int_{\RR^d} \psi_\eps(v-\bar v_k(t)) \rd v  =1$ and that the mid-point composite quadrature rule is of order 2 for smooth functions. Note that the constant in the error depends on $\eps$ but not on the location of the particles. Therefore, we conclude that
			$$
			\int_0^t I_2 \,ds = \sum_i w_i \sum_l h^{d} \psi_\eps(v_l^c-\bar v_i(t)) - \sum_i w_i \sum_l h^{d} \psi_\eps(v_l^c-\bar v_i(0)) = O(h^2)
			$$
			in the time interval $[0,t]$.
	\end{proof}
	
	\begin{remark}
		The particle method for the alternative regularization for the entropy \eqref{alter} has the advantage of not needing a continuous convolution and it also has the conservation and dissipative properties. The particle method reads as
		\begin{align} \label{vt4}
		\frac{d \tilde v_i(t)}{dt} =  -  \sum_j w_jA( \tilde v_i -  \tilde v_j) \left[  \nabla \frac{\delta \tilde E^N_\eps}{\delta f}( \tilde v_i) - \nabla \frac{\delta  \tilde E^N_\eps}{\delta f}( \tilde v_j)  \right] \,,
		\end{align}
		with
		\begin{align*}
		\nabla \frac{\delta \tilde E^N_\eps}{\delta f}(v)  = &\, \frac{ \sum_k w_k \nabla\psi_{\varepsilon}( v-\tilde v_k)}{ \sum_k w_k\psi_{\varepsilon}( v-\tilde v_k)} + \sum_k w_k \frac{\nabla\psi_{\varepsilon}( v-\tilde v_k)}{ \sum_m w_m \psi_{\varepsilon}(\tilde v_k-\tilde v_m)}\,,
		\end{align*}
		according to \eqref{altervar}. One can show that the semidiscrete particle method \eqref{vt4} satisfies the conservation of mass, momentum, and energy and the dissipation of entropy defined as
		\begin{equation*}
		\tilde E^N_\eps =\sum_i w_i \log\left(  \sum_j w_j \psi_\eps(\tilde v_i - \tilde v_j )\right)\,,
		\end{equation*}
		then $\frac{d}{dt} \tilde E^N_\eps = - \tilde D^N_\eps \leq 0$, where 
		\[
		\tilde D_\eps^N = \frac{1}{2}\sum_{i,j} w_{i} w_j \left( \nabla \frac{\delta \tilde E_\eps^N}{\delta f} (\tilde v_i) - \nabla \frac{\delta \tilde E_\eps^N}{\delta f} (\tilde v_j)\right)  \cdot A(\tilde v_i -\tilde  v_j) \left( \nabla \frac{\delta \tilde E_\eps^N}{\delta f} (\tilde v_i) - \nabla \frac{\delta \tilde E_\eps^N}{\delta f} (\tilde v_j)\right)\,.
		\]
		This alternative regularization will be explored elsewhere.
	\end{remark}
	
	
	\section{Numerical implementation and simulation}
	
	In order to visualize our particle solution and compare it to the exact solutions in smoother norms, we construct a blob solution, as in \cite{carrillo2017blob}, obtained by convolving the particle solution with the mollifier,
	\begin{align} \label{blobsolution}
	\tilde{f}^N(t,v) := (\psi_\eps*\bar f^N)(t,v)= \sum_{i = 1}^N w_i \psi_\eps(v-\bar v_i(t))\,,
	\end{align}
	with $\bar v_i(t)$ given by \eqref{vt3} for all $t>0$. We measure the accuracy of our numerical method with respect to the $L^1$- and $L^\infty$-norms. To compute the $L^1$- and $L^\infty$-errors, we take the difference between the exact or reference solution and the blob solution \eqref{blobsolution} and evaluate discrete $L^p$- and $L^\infty$-norms in a grid. The norms will be computed in this computational mesh using the centers of the squares $Q_i$ as
	$$
	\|g\|_{L^{p}}^p = \sum_{i=1}^N h^d |g(v_i^c)|^p\,, \quad \|g\|_{L^{\infty}} = \max_{i} |g(v_i^c)|\,,
	$$
	for any function $g$ defined on the computational mesh, and $1\leq p < \infty$. The quantities of interest will be computed as follows: the discrete mass, momentum and energy are defined as
	\begin{equation*}
	\sum_{i=1}^N w_i ,\quad \sum_{i=1}^N w_i \bar v_i \quad \mbox{and} \quad\sum_{i=1}^N w_i |\bar v_i|^2\,,
	\end{equation*}
	respectively. The discrete entropy is defined by $\bar E_\eps^N$ in \eqref{discreteentropy}.
	
	Let us now comment on the practical implementation of the method. The time discretization of the system of ODEs defined by the particle method \eqref{vt3} is done by the simple explicit Euler method. This choice is motivated by our main purpose: we want to illustrate the performance of this particle method by focusing on the basic properties and its capabilities even with the lowest order in time discretization. Note that the fully discrete-in-time method conserves mass and momentum exactly, but the energy conservation is satisfied up to a first order error in time. Indeed, mass is automatically conserved. To see the momentum conservation, note that a time discrete version of \eqref{vt2} yields
		\begin{align}
		\frac{1}{\Delta t} (v_i^{n+1}-v_i^n)& =   -  \sum_j w_jA(v_i^n - v_j^n) \left[  \nabla \frac{\delta E^N_\eps}{\delta f}(v_i^n) - \nabla \frac{\delta E^N_\eps}{\delta f}(v_j^n)  \right].
		\end{align}
		Multiplying both hand sides by $w_i$ and sum over $i$, we obtain
		\begin{align}
		\frac{1}{\Delta t} (\sum_i w_i v_i^{n+1}-\sum_i w_i v_i^n)& =   -  \sum_{ij} w_iw_jA(v_i^n - v_j^n) \left[  \nabla \frac{\delta E^N_\eps}{\delta f}(v_i^n) - \nabla \frac{\delta E^N_\eps}{\delta f}(v_j^n)  \right]\nonumber\\
		&=   \sum_{ij} w_iw_jA(v_i^n - v_j^n) \left[  \nabla \frac{\delta E^N_\eps}{\delta f}(v_i^n) - \nabla \frac{\delta E^N_\eps}{\delta f}(v_j^n)  \right]=0,
		\end{align}
		where in the second equality, we switched $i$ and $j$ and used symmetry of matrix $A$. However, the same trick does not work in the energy case, hence the energy is only conserved up to $O(\Delta t)$. The numerical example in the next section (in particular, Figure~\ref{fig:2DBKW_energy} (left)) also confirms this fact.
	
	We will check these issues later on in the examples. One can obviously improve some of the time discretization errors committed by choosing higher order time approximations of the ODE system with adaptive time stepping. We leave this for future work in the scientific computing direction focusing here on the convergence analysis and error in velocity of the particle approximation \eqref{vt3}.
	
	As usual in particle methods, the regularization parameter has to be chosen very carefully. This regularization was already used for nonlinear diffusion and aggregation-diffusion equations in \cite{carrillo2017blob}. It was proven in \cite[Theorem 6.1]{carrillo2017blob} that, for the porous medium equation with exponent larger than or equal to 2, a particle method using the regularization strategy presented in this work is convergent by choosing $h^2= o(\eps)$ as $\eps\to 0$. By choosing $h^p\simeq\eps$, the previous constraint is satisfied for $0<p<2$. Then, it was checked heuristically that with $\eps\simeq h^{1.98}$, the numerical particle scheme is a second order approximation to the solutions of all nonlinear degenerate diffusion equations of porous medium type and also for the heat equation. Notice it is more convenient to choose the largest possible $h$ to have the least number of particles since $h=2L/n$. For these reasons, the regularizing parameter for the Landau equation is chosen as 
	$\varepsilon=0.64h^{1.98}$. Here the prefactor is empirical and is found by trial and error. 
	
	Finally, let us comment that this error estimate is different for transport equations as studied in \cite{CP,CB}. For the transport equation, depending on the regularity of the initial data, one gets $h^p\simeq\eps$ for $0<p<1$, that is $h= o(\eps)$ meaning that for transport equations one needs typically smaller meshes and therefore more particles than for diffusion-type equations.

	\subsection{Example 1: 2D BKW solution for Maxwell molecules}
	
	In this and next subsections, we use the BKW solution in 2D and 3D to validate the accuracy of our method. This is one of the few analytical solutions one can construct for the Landau equation. For the reader's convenience, we give the derivation in Appendix A.
	
	Consider the collision kernel
	\begin{equation*}
	A(z)=\frac{1}{16}(|z|^2I_d-z\otimes z),
	\end{equation*}
	and an exact solution given by
	\begin{equation*}
	f^{\text{ext}}(t,v)=\frac{1}{2\pi K}\exp \left ( -\frac{|v|^2}{2K}\right)\left(\frac{2K-1}{K}+\frac{1-K}{2K^2}|v|^2\right), \quad K=1-\exp(-t/8)/2.
	\end{equation*}
	We choose $t_0=0$ and compute the solution until $t=5$. The number of particles are chosen as $N=n^2$ with $n=60, 80, 100, 120, 150$. The computational domain is $[-L,L]^2$ with $L=4$, so the initial mesh size is $h=2L/n$. The forward Euler method with $\Delta t =0.01$ is used for time discretization.
	
	We first track the relative $L^2$ error of the solution, see Figure~\ref{fig:2DBKW_error} (left), from which we observe the errors remain stable over time and decrease with higher number of particles. To check the decay rate, we generate the loglog plot of the errors at a fixed time $t=5$, see Figure~\ref{fig:2DBKW_error} (right). Here the $x$-axis is $h$, i.e., the initial mesh size. Using the least square fitting, we can find the approximate slope of the errors which exhibits almost second order convergence. 
	
	\begin{figure}[htp]
		\begin{center}
			\includegraphics[width=3.2in,height=2.5in]{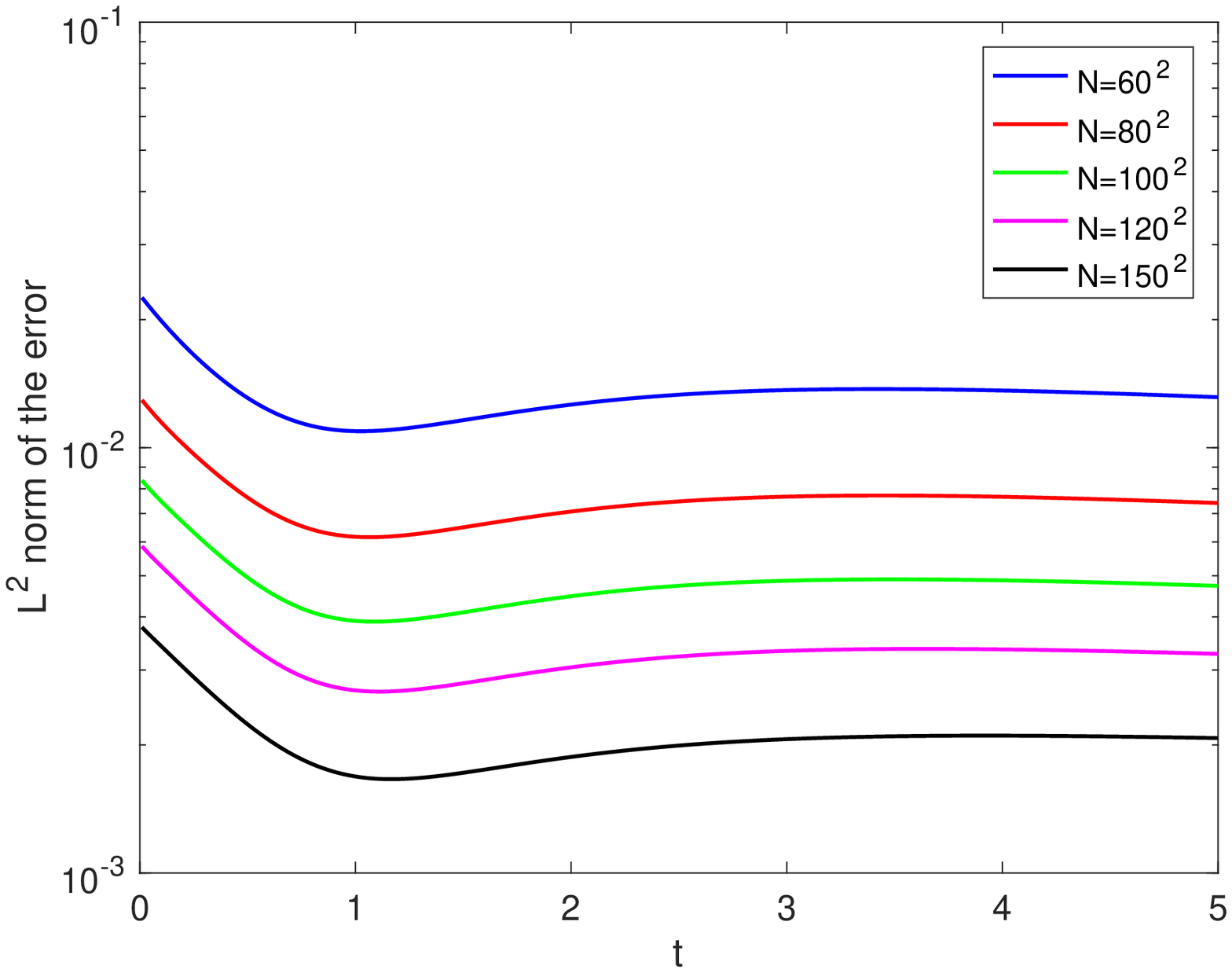}
			\includegraphics[width=3.2in,height=2.5in]{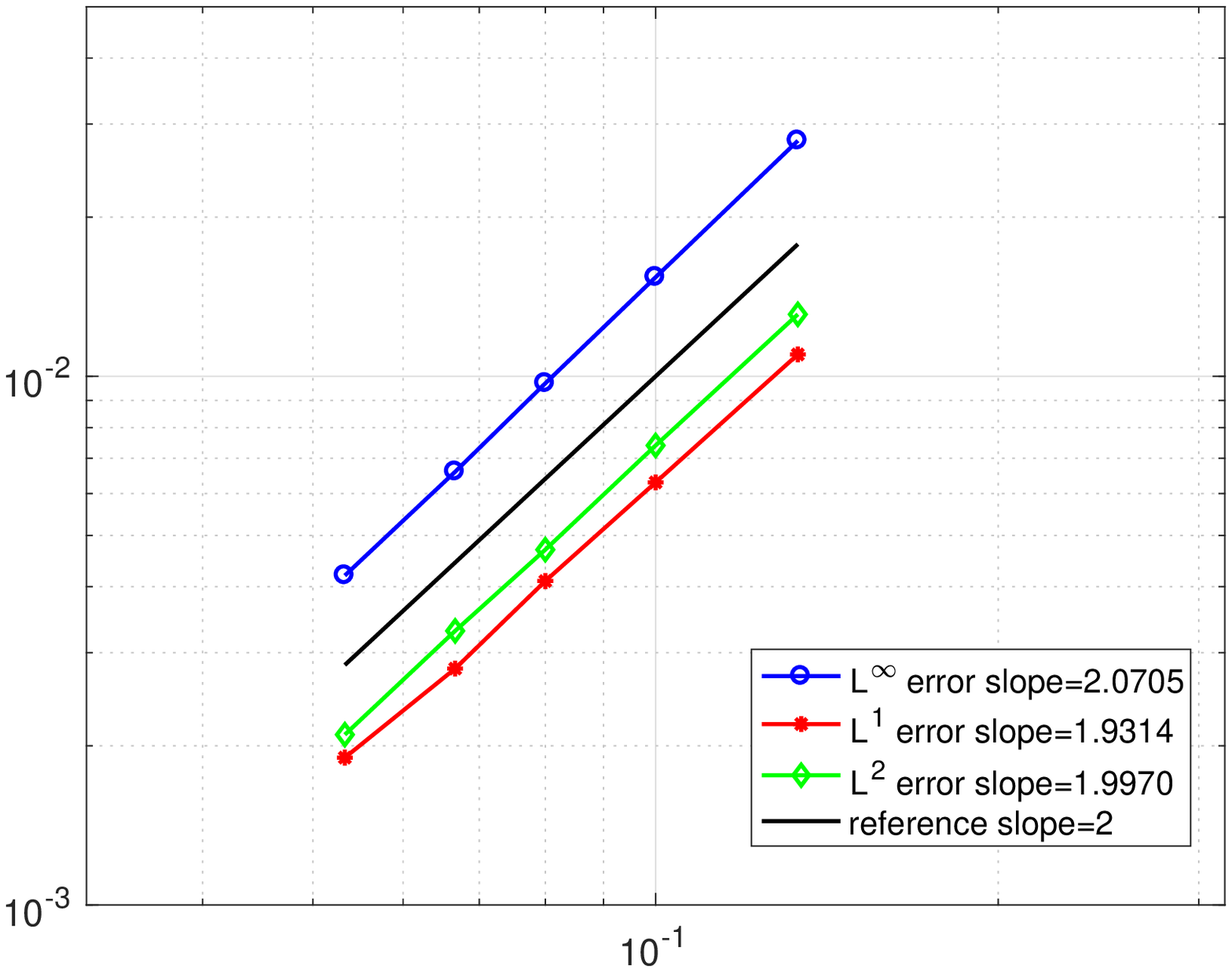}
		\end{center}
		\caption{Left: Time evolution of $\|f^{\text{num}}-f^{\text{ext}}\|_{L^{2}}/\|f^{\text{ext}}\|_{L^{2}}$ with respect to different number of particles. Right: Relative $L^{\infty}$, $L^1$, and $L^2$ norms of the error at time $t=5$ with respect to different $h$.}
		\label{fig:2DBKW_error}
	\end{figure}
	
	To further check the conservation and entropy decay properties of the method, we plot the time evolution of the total energy and relative entropy of the system in Figure~\ref{fig:2DBKW_energy}. The energy is conserved up to a very small error (this error decays when the time step decreases) while the entropy decays monotonically as expected.
	Analogously to equation \eqref{discreteentropy}, we define the relative entropy as
	\begin{equation*}
	\sum_l h^d   \left( \sum_{k=1}^N w_k\psi_{\varepsilon}(v_l^c-\bar{v}_k)\right)\left( \log \left( \sum_{k=1}^N w_k\psi_{\varepsilon}(v_l^c-\bar{v}_k)\right)  +  \log (2\pi)   + \frac{1}{2}  |v_l^c|^2 \right)\,.
	\end{equation*}

	\begin{figure}[htp]
		\begin{center}
			\includegraphics[width=3.2in,height=2.5in]{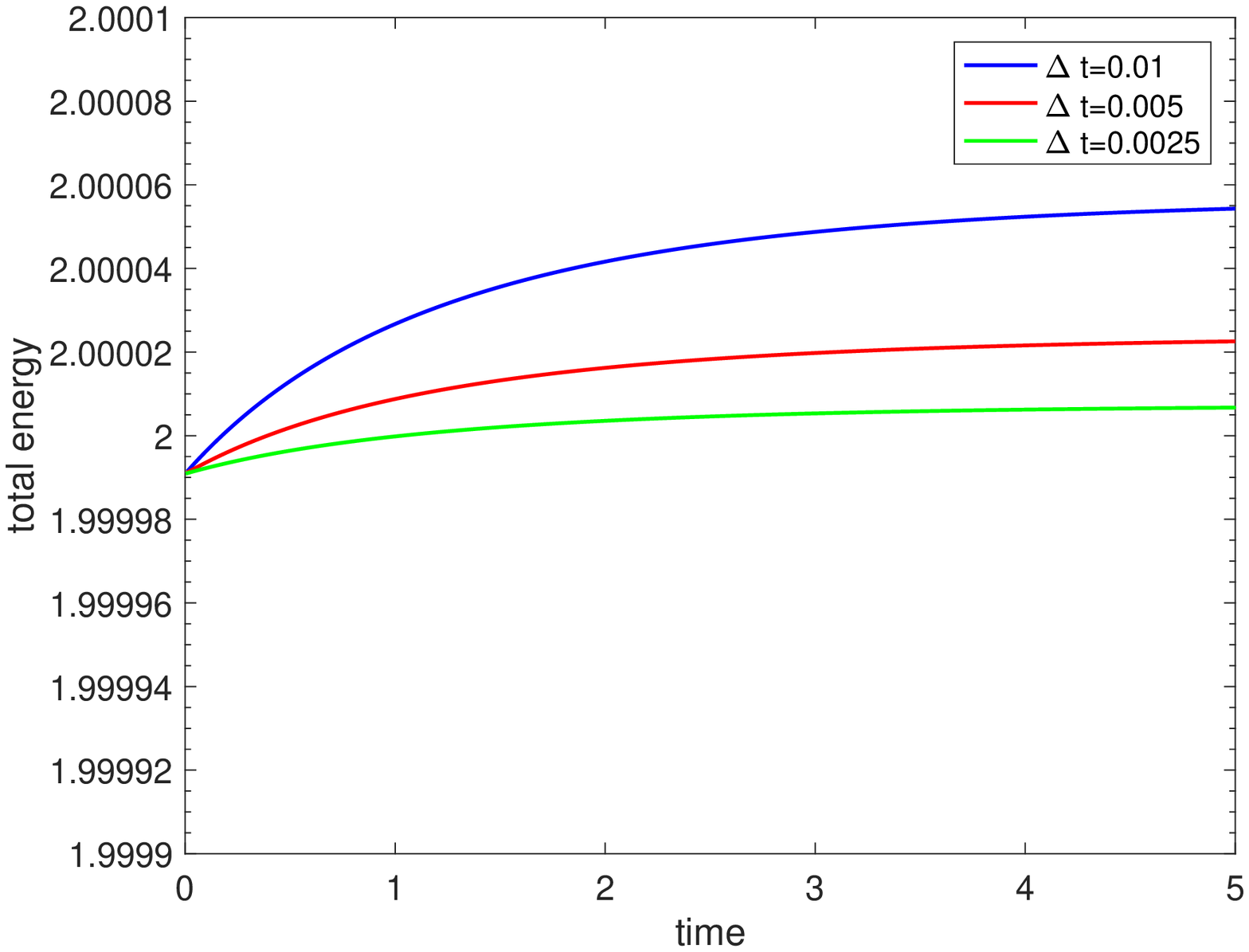}
			\includegraphics[width=3.2in,height=2.5in]{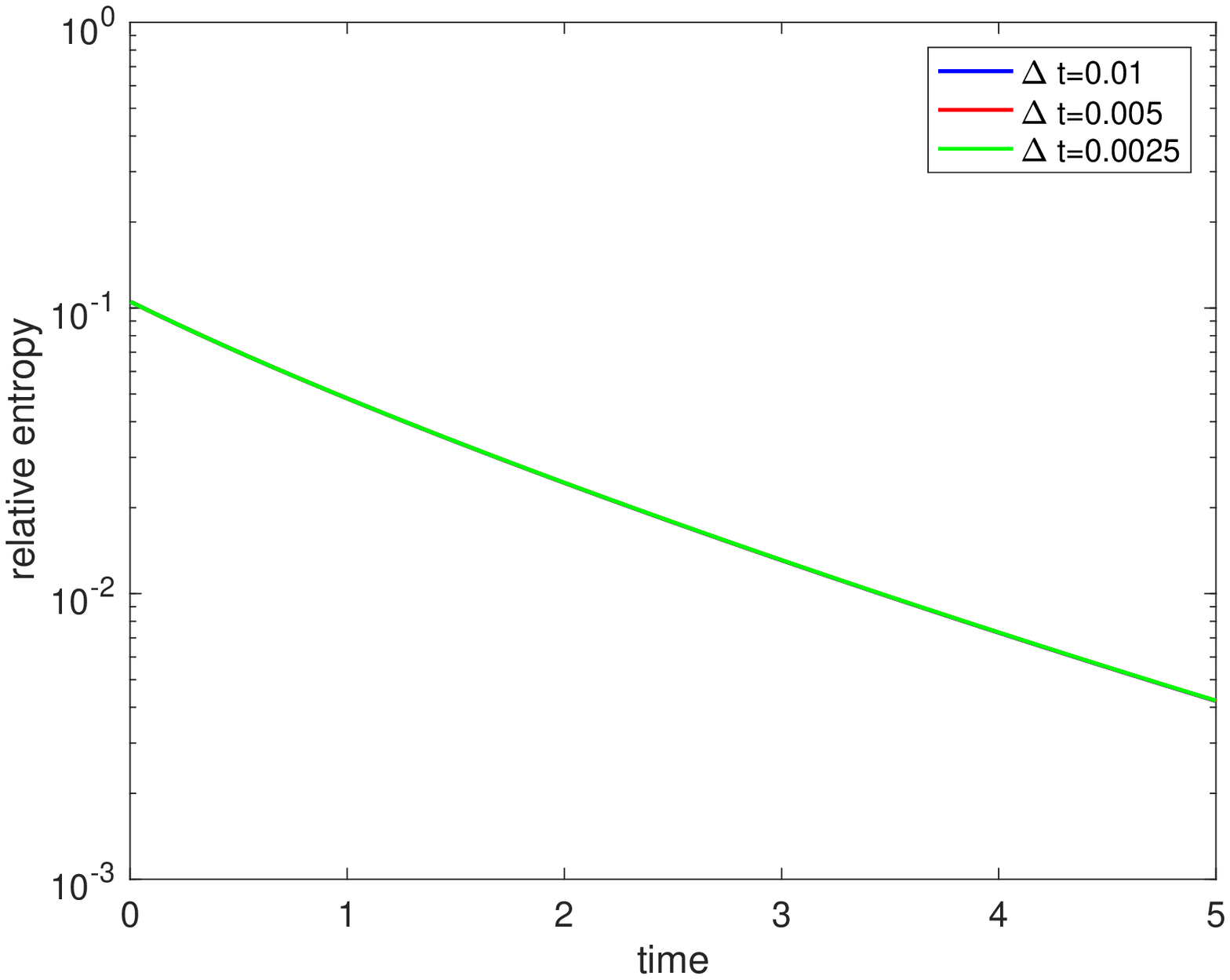}
		\end{center}
		\caption{Time evolution of the total energy (left) and relative entropy (right) with respect to different time step. Particle number $N=60^2$ is fixed.}
		\label{fig:2DBKW_energy}
	\end{figure}


	\subsection{Example 2: 3D BKW solution for Maxwell molecules}
	
	Consider the collision kernel
	\begin{equation*}
	A(z)=\frac{1}{24}(|z|^2I_d-z\otimes z),
	\end{equation*}
	and an exact solution given by
	\begin{equation*}
	f^{\text{ext}}(t,v)=\frac{1}{(2\pi K)^{3/2}}\exp \left ( -\frac{|v|^2}{2K}\right)\left(\frac{5K-3}{2K}+\frac{1-K}{2K^2}|v|^2\right), \quad K=1-\exp(-t/6).
	\end{equation*}
	We choose $t_0=5.5$ and compute the solution until $t=6$. The number of particles are chosen as $N=n^3$ with $n=20, 30, 40, 50, 60$. The computational domain is $[-L,L]^3$ with $L=4$, so the initial mesh size is $h=2L/n$. The forward Euler method with $\Delta t =0.01$ is used for time discretization.
	
	Here we plot similar figures as in the 2D case. We mention that the direct computation in 3D is computationally costly so that we cannot afford too many particles and the errors are generally larger than in 2D. Remarkably, even with a small number of particles, up to $60^3$, we are still able to observe the second order convergence in $L^1$ and $L^2$ norms ($L^{\infty}$ norm is not very reliable due to the limited number of particles), see Figure~\ref{fig:3DBKW_error}.
	
	\begin{figure}[htp]
		\begin{center}
			\includegraphics[width=3.2in,height=2.5in]{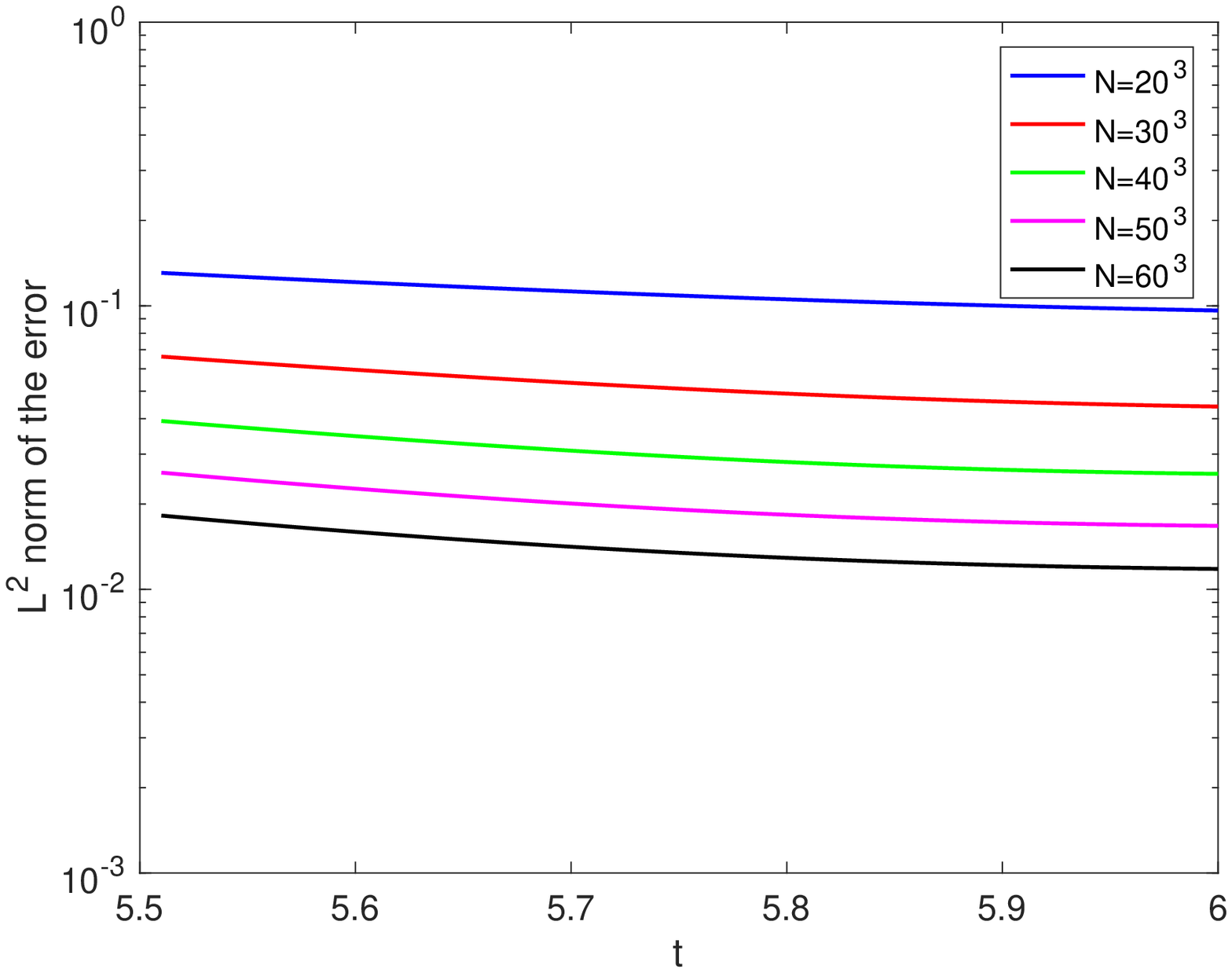}
			\includegraphics[width=3.2in,height=2.5in]{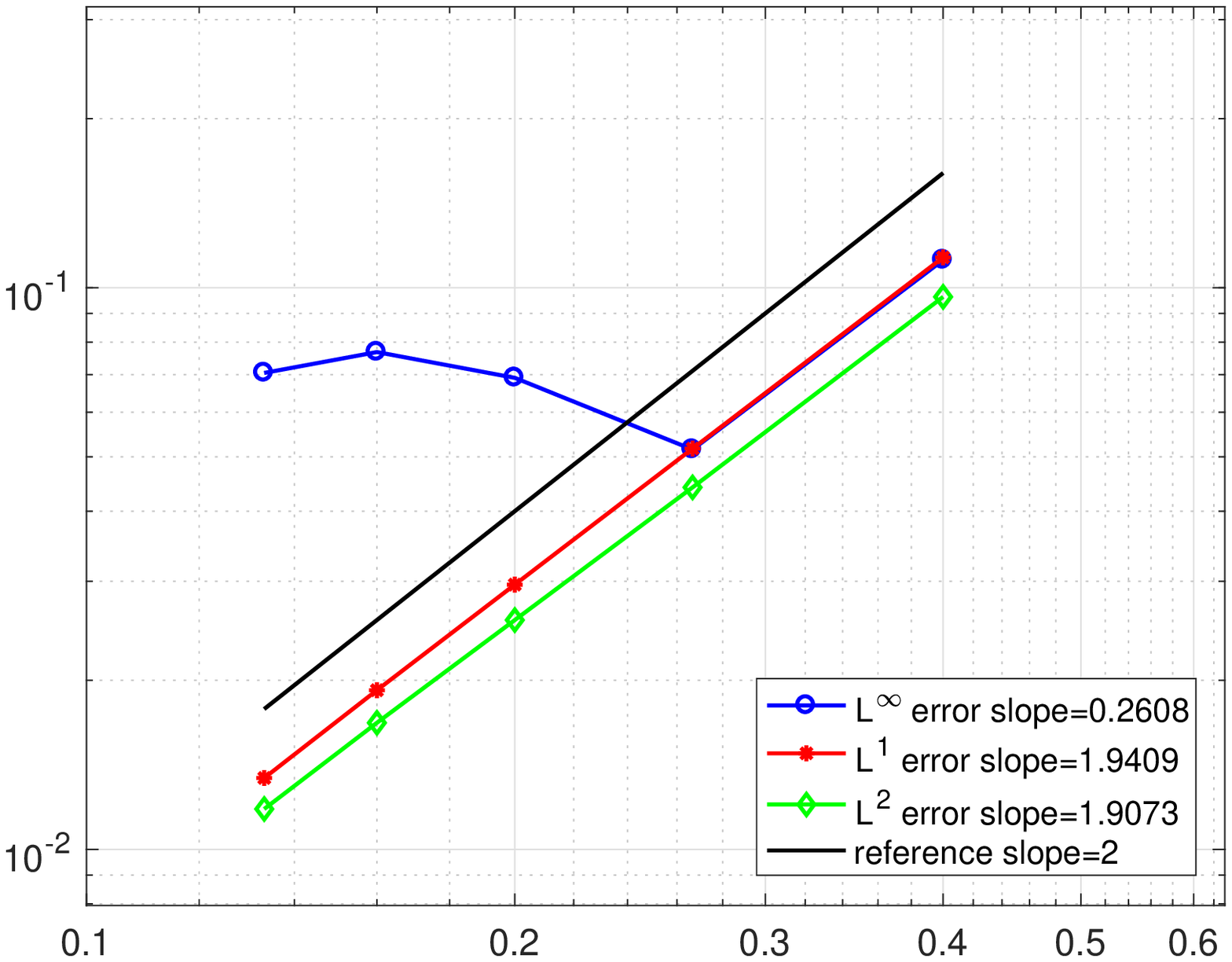}
		\end{center}
		\caption{Left: Time evolution of $\|f^{\text{num}}-f^{\text{ext}}\|_{L^{2}}/\|f^{\text{ext}}\|_{L^{2}}$ with respect to different number of particles. Right: Relative $L^{\infty}$, $L^1$, and $L^2$ norms of the error at time $t=6.5$ with respect to different $h$.}
		\label{fig:3DBKW_error}
	\end{figure}
	

	\subsection{Example 3: 2D anisotropic solution with Coulomb potential}
	
	Consider the collision kernel
	\begin{equation*}
	A(z)=\frac{1}{16}\frac{1}{|z|^3}(|z|^2I_d-z\otimes z),
	\end{equation*}
	and the initial condition
	\begin{equation*}
	f(0,v)=\frac{1}{4\pi}\left\{\exp\left(-\frac{(v-u_1)^2}{2}\right)+\exp\left(-\frac{(v-u_2)^2}{2}\right) \right\}, \quad u_1=(-2,1), \quad u_2=(0,-1).
	\end{equation*}
	For this example, we do not have the exact solution to compare with. Therefore, we compare the particle method with the Fourier spectral method in \cite{PRT00}. For the particle method, we choose the following parameters: the number of particles is $N=120^2$ and the computational domain is $[-10,10]^2$. The forward Euler method with $\Delta t =0.1$ is used for time discretization.
	
	For the spectral method, we choose the following parameters: the number of Fourier modes in each velocity dimension is $N_v=128$; the computational domain is $[-10,10]^2$. The second order Heun's method with $\Delta t=0.1$ is used for time discretization.
	
	The results are shown in Figure
	\ref{fig:2DGaussian}. The results of the two methods match very well.
	
	
	\begin{figure}[htp]
		\begin{center}
			\includegraphics[width=3.2in,height=2.5in]{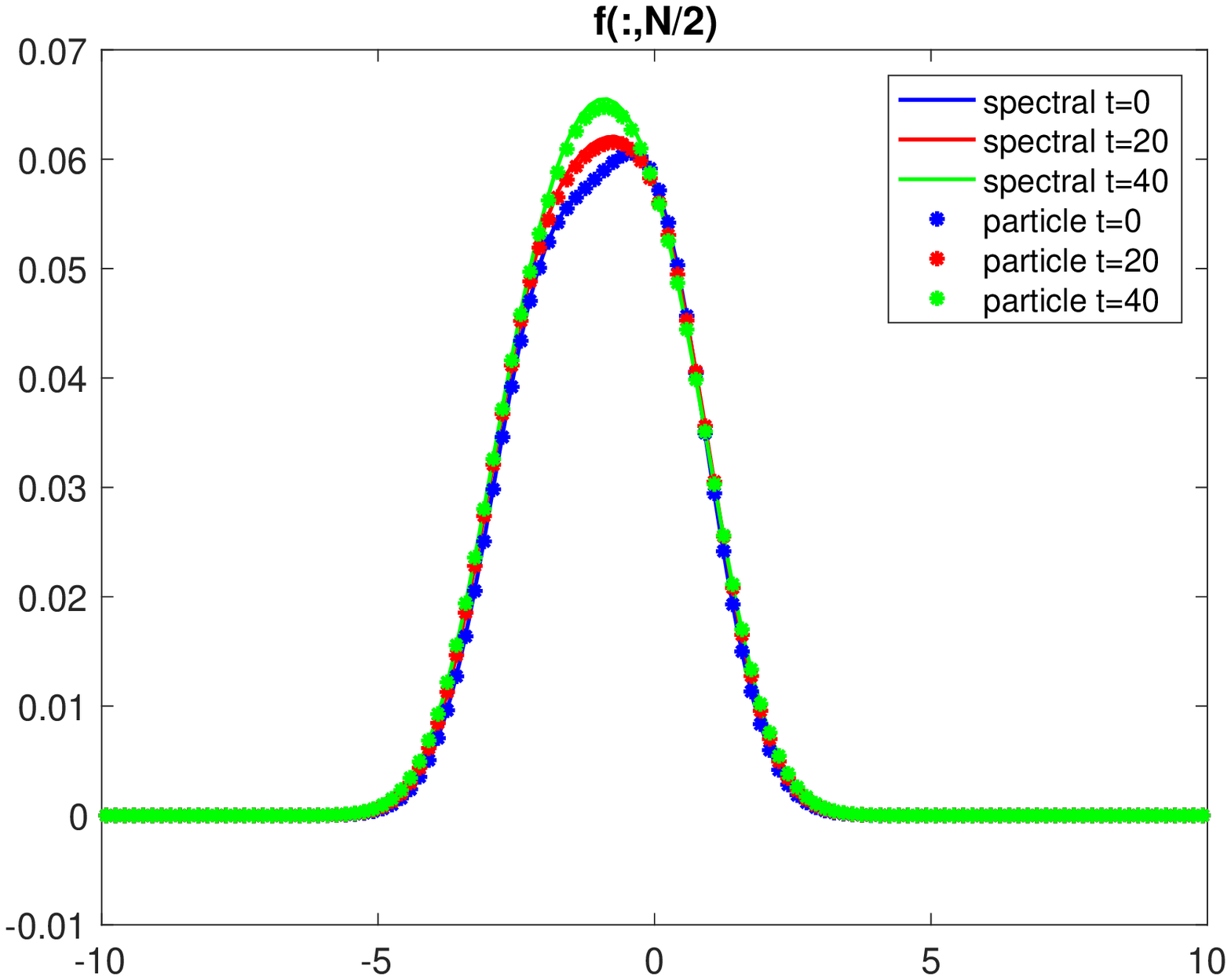}
			\includegraphics[width=3.2in,height=2.5in]{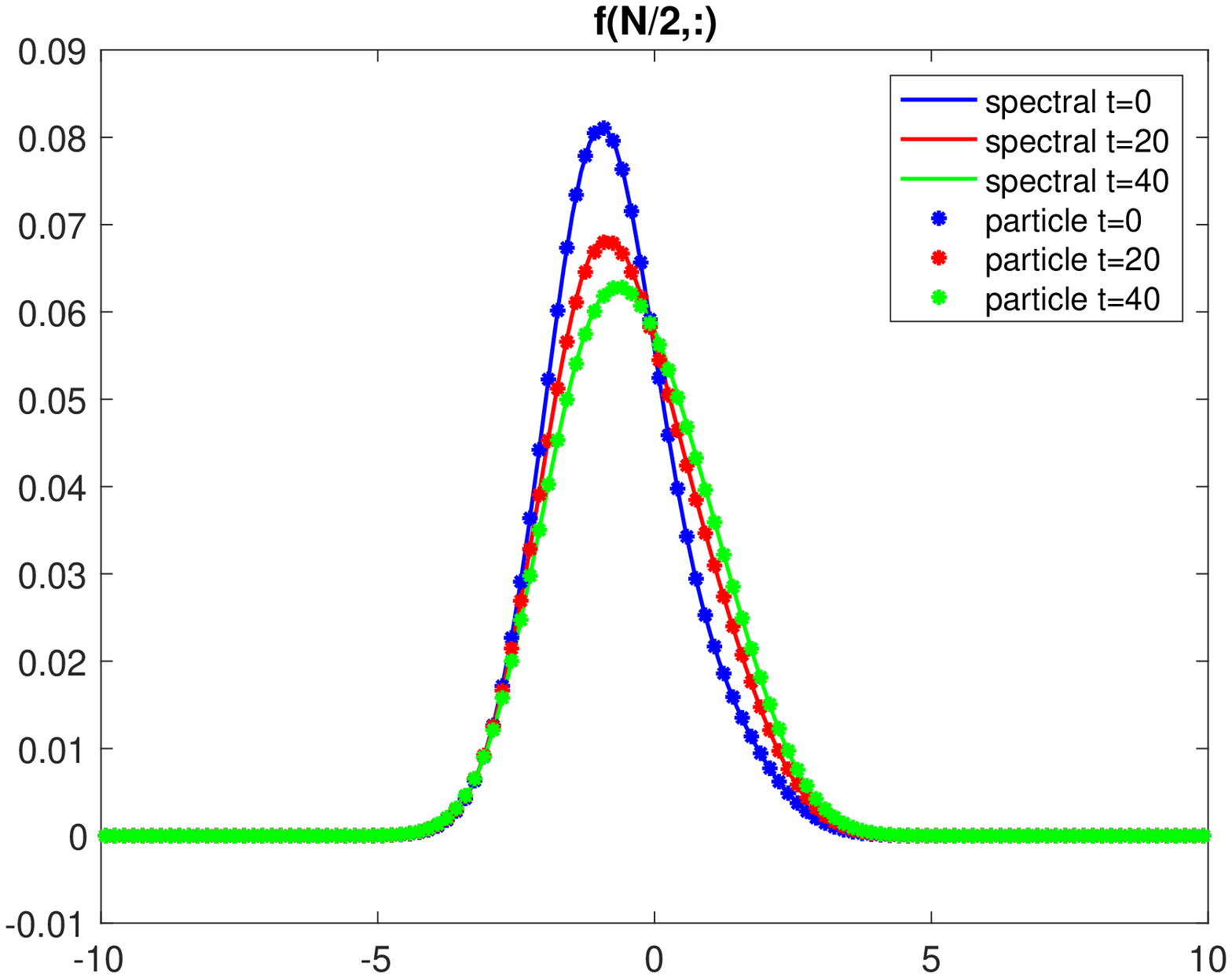}
		\end{center}
		\caption{Comparison of the particle method (particle number $N=120^2$) with the spectral method ($N_v=128^2$). Slices of the solutions at different times.}
		\label{fig:2DGaussian}
	\end{figure}
	
	To better check the convergence of the particle method, we use the spectral method solution with $N_v=128$ as a reference solution. For the particle method, we test $N=60^2, 80^2, 100^2, 120^2$, and for each of them reconstruct the solution on the same mesh as the spectral method (so that we can directly compare the error). The results are shown in Figure~\ref{fig:2DGaussian_order} where we can observe better match as $N$ increases. We also compute the convergence order similarly as in example 1. Strikingly, we can still obtain almost second order convergence, see Figure~\ref{fig:2DGaussian_error}.
	
	\begin{figure}[htp]
		\begin{center}
			\includegraphics[width=3.2in,height=2.5in]{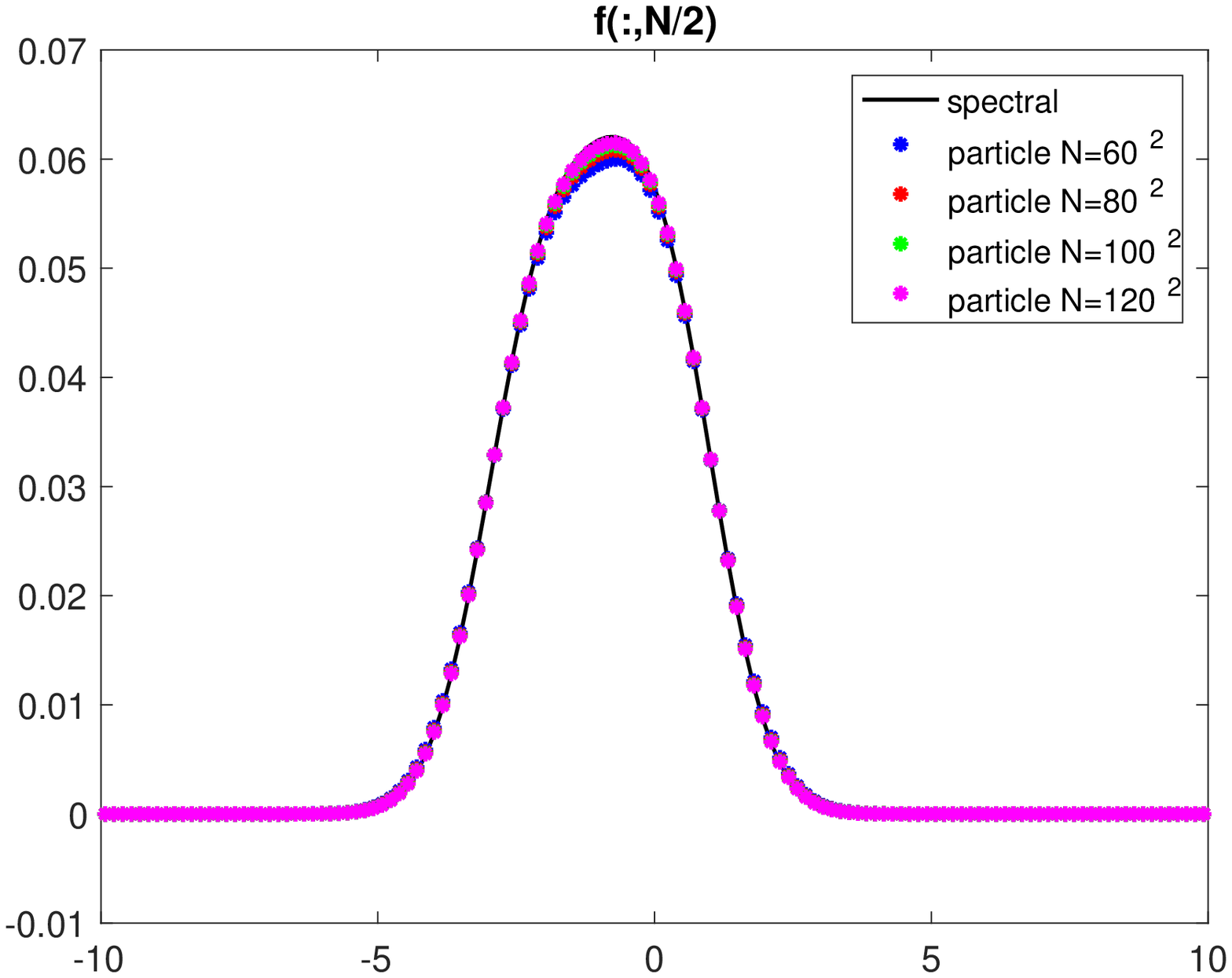}
			\includegraphics[width=3.2in,height=2.5in]{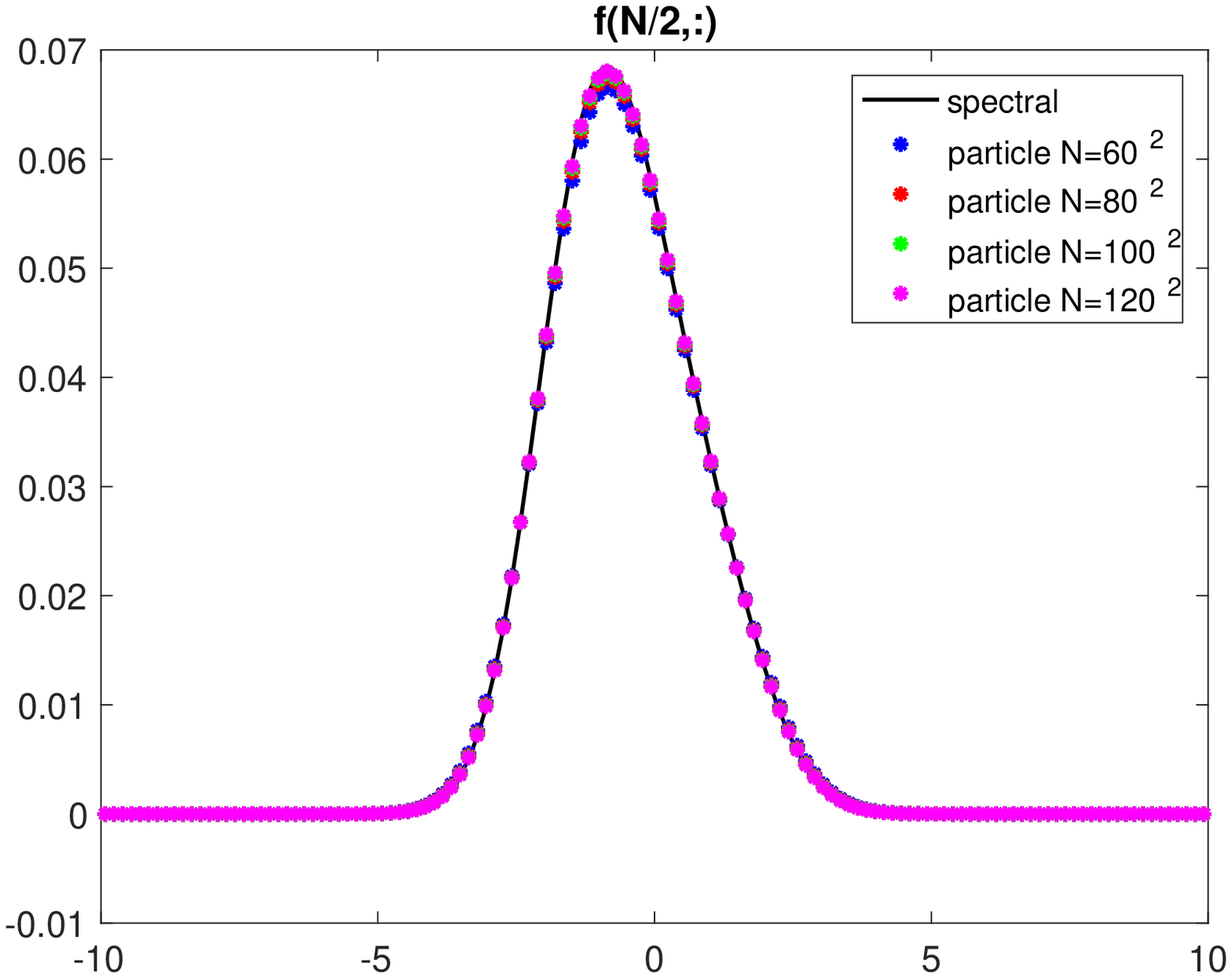}
		\end{center}
		\caption{Comparison of the particle method (using different particle numbers) with the spectral method ($N_v=128^2$). Slices of the solutions at time $t=20$.}
		\label{fig:2DGaussian_order}
	\end{figure}
	
	\begin{figure}[htp]
		\begin{center}
			\includegraphics[width=3.2in,height=2.5in]{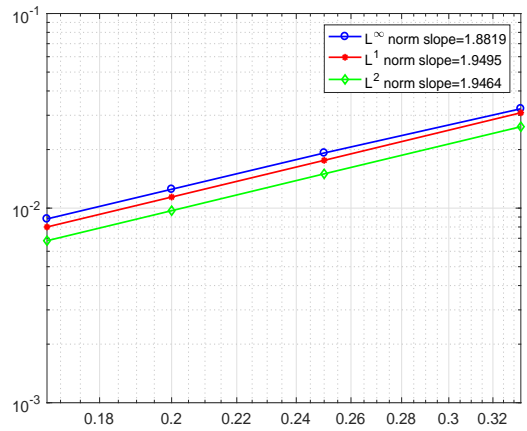}
		\end{center}
		\caption{Relative $L^{\infty}$, $L^1$, and $L^2$ norms of the error at time $t=20$ with respect to different $h$.}
		\label{fig:2DGaussian_error}
	\end{figure}

	\subsection{Example 4: 3D Rosenbluth problem with Coulomb potential}
	
	Consider the collision kernel
	\begin{equation*}
	A(z)=\frac{1}{4\pi}\frac{1}{|z|^3}(|z|^2I_d-z\otimes z),
	\end{equation*}
	and the initial condition
	\begin{equation*}
	f(0,v)=\frac{1}{S^2}\exp\left(-S\frac{(|v|-\sigma)^2}{\sigma^2}\right), \quad \sigma=0.3, \quad S=10.
	\end{equation*}
	A similar test has been considered in other papers \cite{PRT00}. For the particle method, we choose the following parameters: the number of particles is $N=50^3$; the computational domain is $[-1,1]^3$. The forward Euler method with $\Delta t =0.2$ is used for time discretization.
	
	For the spectral method, we choose the following parameters: the number of Fourier modes in each velocity dimension is $N_v=64$; the computational domain is $[-1,1]^3$. The second order Heun's method with $\Delta t=0.2$ is used for time discretization.
	
	The cost of computing the particle method in 3D becomes very heavy if the right-hand side of \eqref{vt3} is performed by direct sums. We resort to efficient methods for computing large sums involving convolution kernels. One possible choice is to make use of the treecode strategy as in \cite{BH86,LJK09} for instance. We give a brief account of its application to the particle method \eqref{vt3} in Appendix \ref{appb}. In Figure \ref{fig:3DRosenbluth} left, we show the comparison of the direct sum solver to the trecode solver by plotting their solutions at $t=20$, $N=50^3$ or $N=40^3$. The error committed is negligible.  In Figure \ref{fig:3DRosenbluth} right, we illustrate the speed-up of the treecode solver with respect to the direct sum solver. The efficiency of the treecode solver is significant with larger number of particles $N$ as expected. The results are obtained using Matlab code on Minnesota Supercomputer Institute Mesabi machine with 12 nodes, further speed up are anticipated with C++ code. 
	
	The result is shown in Figure~\ref{fig:Rosenbluth} which we observe good agreement between the spectral method and the particle method using the treecode acceleration, especially for short time. For longer time, the discrepancy is due to the limited resolution of the particle method. Note that we do get better convergence when increasing the number of particles from $N=50^3$ to $N=60^3$.

	\begin{figure}[htp]
		\begin{center}
			\includegraphics[width=3.2in,height=2.4in]{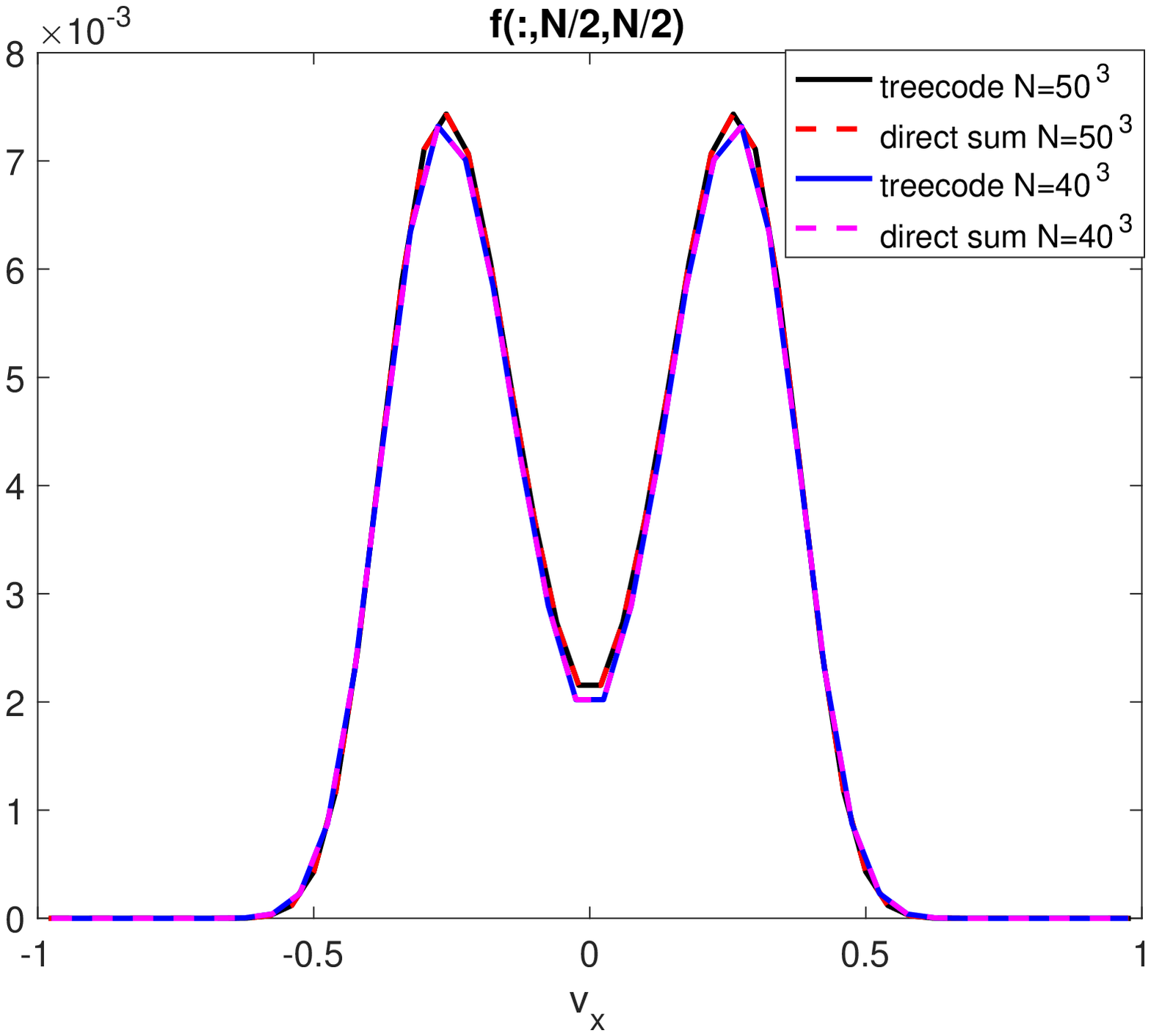}
			\includegraphics[width=3.2in,height=2.4in]{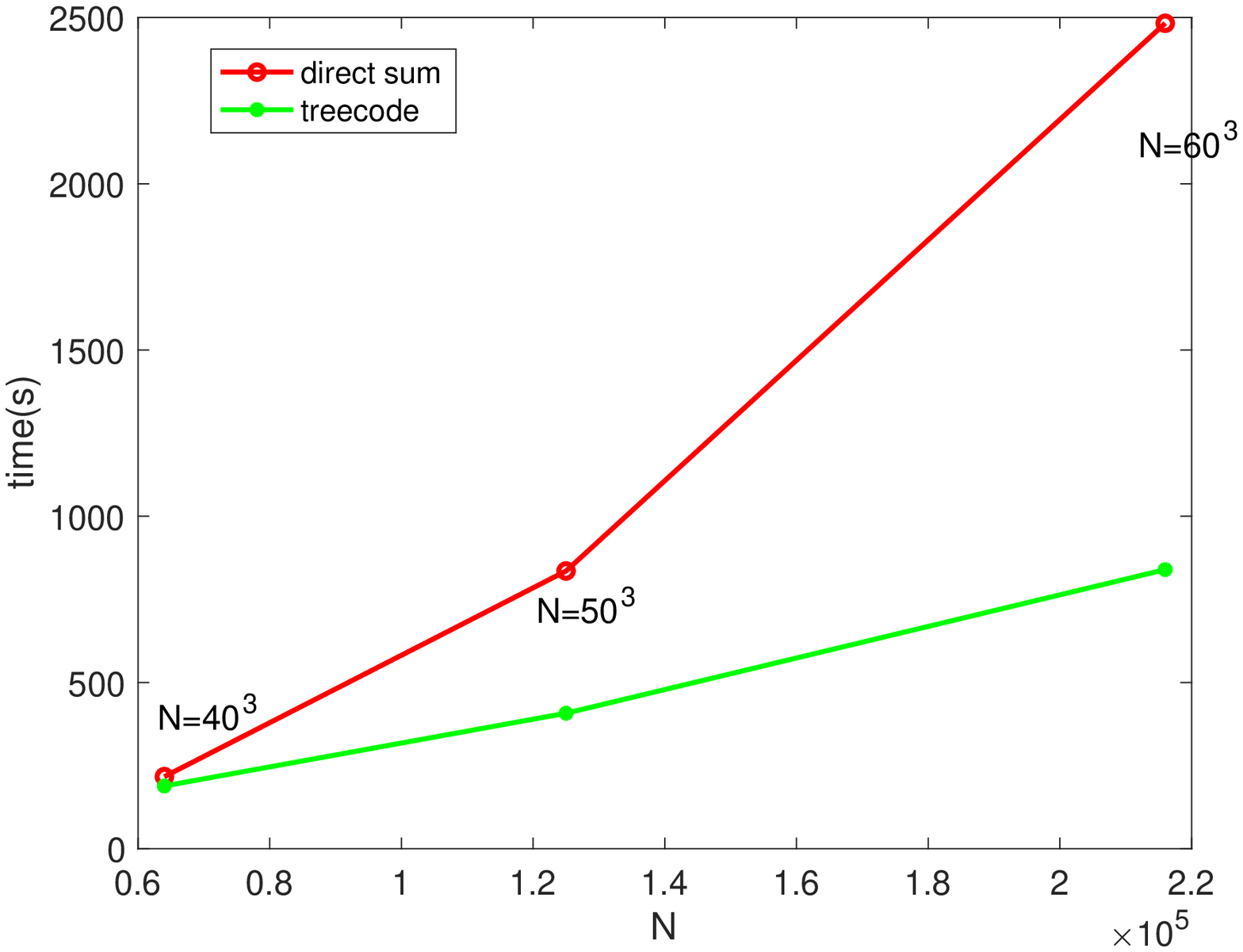}
		\end{center}
		\caption{Left: comparing a slice of the solution with direct sum and treecode at $t=20$, $N=50^3$ or $N=40^3$. Right: comparison of computational time (in seconds) for one step with the treecode solver and with the direct sum solver. }
		\label{fig:3DRosenbluth}
	\end{figure}

	\begin{figure}[htp]
		\begin{center}
			\includegraphics[width=5.5in,]{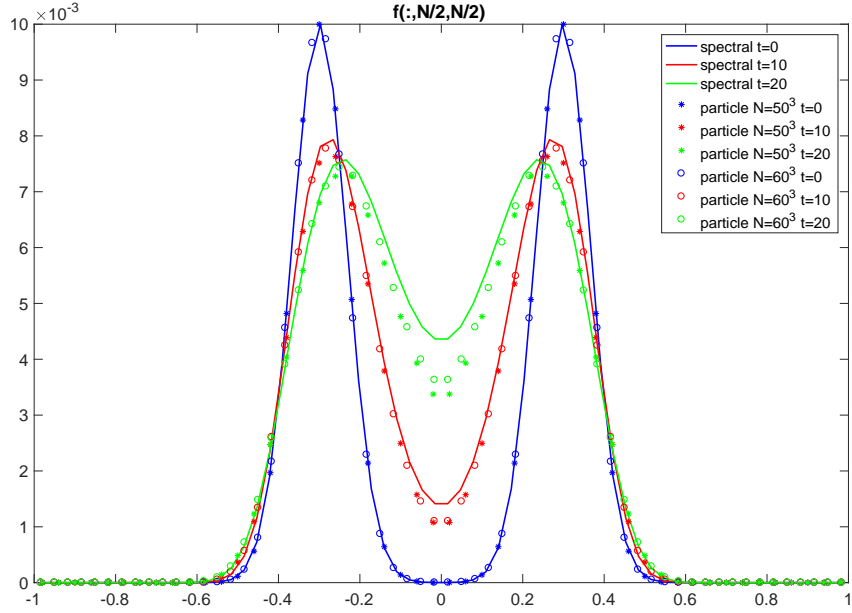}
		\end{center}
		\caption{Comparison of the particle method using treecode acceleration (using different particle numbers) with the spectral method ($N_v=64^3$). Slices of the solution at different times.}
		\label{fig:Rosenbluth}
	\end{figure}

	\appendix
	
	\section{BKW solutions for Maxwell molecules}\label{appa}
	
	We derive the BKW solution to the Landau equation \eqref{landau} in the Maxwell molecules case where $A(z) =B( |z|^{2} I - z \otimes z)$ whose kernel is spanned by $z$. Looking for solutions with the ansatz
	\begin{equation*}
	f(t,v)=\frac{1}{(2\pi K)^{d/2}}\exp \left ( -\frac{v^2}{2K}\right)(P+Qv^2), 
	\end{equation*}  
	where $K=K(t)$ is to be found, we require
	\begin{align*}
	&\rho=\int_{\mathbb{R}^d} f\,\rd{v}=P+dKQ=1,\\
	&T=\frac{1}{d}\int_{\mathbb{R}^d} fv^2\,\rd{v}=KP+(d+2)K^2Q=1\,.
	\end{align*}
	These conditions imply that
	\begin{equation*}
	P=\frac{(d+2)K-d}{2K} \quad \mbox{and} \quad Q=\frac{1-K}{2K^2}\,,
	\end{equation*}
	and therefore,
	\begin{equation*}
	f(t,v)=\frac{1}{(2\pi K)^{d/2}}\exp \left ( -\frac{v^2}{2K}\right)\left(\frac{(d+2)K-d}{2K}+\frac{1-K}{2K^2}v^2\right).
	\end{equation*}
	Direct differentiation yields
	\begin{equation} \label{ff}
	\frac{\partial f}{\partial t}=\frac{1}{(2\pi K)^{d/2}}\exp \left ( -\frac{v^2}{2K}\right) \left[d(d+2)K^2-2(d+2)Kv^2+v^4\right]\frac{1-K}{4K^4}K'.
	\end{equation}
	It is easy to check that
	\begin{align*}
	\nabla_v f&=\frac{1}{(2\pi K)^{d/2}}\exp \left ( -\frac{v^2}{2K}\right) \left(2Q-\frac{P+Qv^2}{K}\right)v,
	\end{align*}
	and hence
	\begin{align*}
	\nabla_v \log f=\frac{\nabla_v f}{f}=\frac{2Q}{P+Qv^2}v-\frac{1}{K}v.
	\end{align*}
	Therefore, we conclude that
	\begin{align*}
	\nabla_v \log f-\nabla_{v_*}\log f_*=2Q\frac{P(v-v_*)+Qv^2(v-v_*)+Q(v_*^2-v^2)v}{(P+Qv^2)(P+Qv_*^2)}-\frac{1}{K}(v-v_*).
	\end{align*}
	Using $A(z)z=0$, we have
	\begin{align*}
	A(v-v_*)\left[\nabla_v \log f-\nabla_{v_*}\log f_*\right]=2Q^2\frac{(v_*^2-v^2)A(v-v_*)v}{(P+Qv^2)(P+Qv_*^2)}.
	\end{align*}
	and then
	\begin{align*}
	A(v-v_*)\left[\nabla_v \log f-\right.&\left.\nabla_{v_*}\log f_*\right]ff_* =\frac{2Q^2}{(2\pi K)^{d}}\exp \left ( -\frac{v^2+v_*^2}{2K}\right)(v_*^2-v^2)A(v-v_*)v\nonumber\\
	&=\frac{2BQ^2}{(2\pi K)^{d}}\exp \left ( -\frac{v^2+v_*^2}{2K}\right)(v_*^2-v^2)\left[(v-v_*)^2I-(v-v_*)\otimes(v-v_*)\right]v.
	\end{align*}
	Hence we deduce that
	\begin{align*}
	\int_{\mathbb{R}^d} A(v-v_*) ff_* \left[ \nabla_v \log f- \nabla_{v_*} \log f_* \right]\, \rd{v_*}
	=\frac{2BQ^2}{(2\pi K)^{d/2}}\exp \left ( -\frac{v^2}{2K}\right)(I_1-v^2I_2),
	\end{align*}
	where
	\begin{align*}
	I_1&=\int_{\mathbb{R}^d}\frac{1}{(2\pi K)^{d/2}}\exp \left ( -\frac{v_*^2}{2K}\right)v_*^2\left[(v-v_*)^2v-(v-v_*)\otimes(v-v_*)v\right]\rd{v_*},\\
	I_2&=\int_{\mathbb{R}^d}\frac{1}{(2\pi K)^{d/2}}\exp \left ( -\frac{v_*^2}{2K}\right)\left[(v-v_*)^2v-(v-v_*)\otimes(v-v_*)v\right]\rd{v_*}.
	\end{align*}
	It can be checked that
	\begin{align*}
	I_1&=(dKv^2+(d^2+2d)K^2)v-(dKv^2+(d+2)K^2)v=(d+2)(d-1)K^2v,\\
	I_2&=(v^2+dK)v-(v^2+K)v=(d-1)Kv,
	\end{align*}
	and therefore, we conclude that
	\begin{align*}
	\int_{\mathbb{R}^d} A(v-v_*)\left[ \nabla_v \log f- \nabla_{v_*} \log f_* \right] ff_* \, \rd{v_*}
	=\frac{2BQ^2}{(2\pi K)^{d/2}}\exp \left ( -\frac{v^2}{2K}\right)(d-1)K[(d+2)K-v^2]v.
	\end{align*}
	Finally, we can write
	\begin{align} \label{QQm}
	\mathcal{Q}_L(f,f)(v)=&\nabla_{v}\cdot \int_{\mathbb{R}^d} A(v-v_*) \left[ \nabla_v \log f- \nabla_{v_*} \log f_* \right] ff_*\, \rd{v_*}\nonumber\\
	=& \frac{2BQ^2}{(2\pi K)^{d/2}}\exp \left ( -\frac{v^2}{2K}\right) (d-1)K\left[\frac{1}{K}v^4-2(d+2)v^2+d(d+2)K\right]\nonumber\\
	=&\frac{1}{(2\pi K)^{d/2}}\exp \left ( -\frac{v^2}{2K}\right)\frac{B(1-K)^2}{2K^4}(d-1)\left[v^4-2(d+2)Kv^2+d(d+2)K^2\right].
	\end{align} 
	Comparing (\ref{ff}) and (\ref{QQm}), we obtain $K'=2B(d-1)(1-K)$, which results in $K=1-C\exp(-2B(d-1)t)$. In 2D, we choose $C=1/2$ and $B=1/16$, then $K=1-\exp(-t/8)/2$. In 3D, we choose $C=1$ and $B=1/24$, then $K=1-\exp(-t/6)$.

	\section{Treecode for computing \eqref{vt3} in 3D}\label{appb}
	In view of \eqref{vt3}, the efficiency of the particle method is limited by five summations appearing on the right hand side. Indeed, if the total number of particles is $N$, then each summation needs $\mathcal O(N^2)$ computations, which is prohibitively expensive in 3D. To mitigate this issue, we developed a treecode method to accelerate the computation. The idea is that, first one partitions the particles into a hierarchy of clusters that has a tree structure (hence the number of cluster is $\mathcal O(\log N)$), then instead of using particle-particle interaction, one considers particle-cluster interaction, and therefore reduces the cost to $\mathcal O(N\log N)$ in total \cite{BH86}. 
	
	In general, consider the summation of the form 
	\begin{equation} \label{eqn:potential}
	U_i = \sum_{j = 1} ^N  q_j \phi(\vecx_i, \vecy_j), \quad i = 1 , 2, \cdots, N,
	\end{equation}
	where $\vecx_i$ and $\vecy_j$ are in $\RR^3$, and $\phi$ can be $\psi_\varepsilon$, three components of $\nabla \psi_\varepsilon$ or four components of $A$ in our case. Assume the particles have been divided into a hierarchy of clusters $C$, then the treecode evaluates the potential \eqref{eqn:potential} as a sum of particle-cluster interactions
	\begin{equation}\label{eqn:Vic}
	U_i = \sum_{c\in C} U_{i,c}, \qquad \mbox{where} \quad
	U_{i,c} = \sum_{w_j \in c} q_j \phi(\vecx_i, \vecy_j)\,.
	\end{equation}
	
	If particle $v_i$ and cluster $c$ are well separated (denoted below as MAC condition (\ref{eqn:MAC}) ), then the terms in (\ref{eqn:Vic}) can be expanded in Taylor series as:
	\begin{eqnarray} \label{eqn:Vic2}
	U_{i,c} &=& \sum_{w_j \in c} q_j \sum_{\| k \| =0}^\infty \frac{1}{k!} D_w^k \phi(\vecx_i, \vecy_c)(\vecy_j - \vecy_c)^k
	= \sum_{\| k \| =0}^\infty   \frac{1}{k!} D_w^k \phi(\vecx_i, \vecy_c) \sum_{\vecy_j \in c} q_j (\vecy_j - \vecy_c)^k  \nonumber
	\\ & \simeq &  \sum_{\| k \| =0}^p a^k(\vecx_i,\vecy_c) m_c^k,
	\end{eqnarray}
	where
	\begin{equation} \label{ak}
	a^k(\vecx_i,\vecy_c) = \frac{1}{k!} D_w^k \phi (\vecx_i, \vecy_c)
	\end{equation}
	is the $k$-th Taylor coefficient, and 
	\begin{equation*}
	m_c^k = \sum_{\vecy_j \in c} q_j (\vecy_j - \vecy_c)^k
	\end{equation*}
	is the $k$-th moment of cluster $c$ and the Taylor series has been truncated at order $p$. 
	
	Now let us specify the meaning of well-separated particle and cluster. Denote $R = |\vecx_i - \vecy_c|$, and $r_c = \max_{j\in c} |\vecy_c - \vecy_j|$, then in order for the Taylor series in (\ref{eqn:Vic2}) to converge, we need $r_c \leq R$. In theory, one can compute $U_{i,c}$ either by direct sum or via (\ref{eqn:Vic2}) depending on the accuracy and efficiency of (\ref{eqn:Vic2}). There is no standard way of choosing between the two as the optimal way is often problem dependent and one may find it by trial and error. However, a practical choice would be 
	\begin{equation}\label{eqn:MAC}
	\frac{r_c}{R} \leq \theta,
	\end{equation}
	where $\theta$ is a user-specified parameter for controlling the expansion error. This condition is called multipole acceptance criterion (MAC). 
	
	Coming back to our case \eqref{vt3}, it remains to compute $a^k$ from \eqref{ak} for different functions $\phi$. When $\phi$ is a Gaussian (i.e., $\psi_\varepsilon$) or gradient of a Gaussian (i.e., $\nabla \psi_\varepsilon$), one can derive the recursive relation for $a^k$ similarly as that in \cite{LJK09}, and here we only derive the ones for each component of matrix $A$. From here on, we denote $k = (k_1,k_2, k_3)$, and let $v = (v_1,v_2,v_3)$, $w=(w_1,w_2,w_3)$, then $A$ has the form 
	\[
	A = |v|^\gamma \left[\begin{array}{ccc} v_2^2+v_3^2  & -v_1v_2  & -v_1 v_3 \\ -v_1 v_2  & v_1^2+v_3^2 & -v_2v_3 \\ -v_1v_3 & -v_2v_3 & v_1^2 + v_2^2 \end{array} \right]\,.
	\]
	
	Let us first compute $a_{11}^k := \frac{1}{k!} D_w^k A_{11} (v-w)$. Taking the derivative of $\Aone$ in $w_1$, one has $D_{w_1} \Aone = \frac{\gamma (w_1- v_1)}{|w-u|^2} \Aone$, and hence 
	\begin{equation} \label{922}
	|w-v|^2 D_{w_1} \Aone = \gamma (w_1 - v_1) \Aone\,.
	\end{equation}
	Further differentiating the above equation $k_1-1$ times in $w_1$ , we obtain
	\[
	|w-v|^2 D_{w_1}^{k_1} \Aone + (w_1-v_1) [2(k_1-1)-\gamma] D_{w_1}^{k_1-1} \Aone + (k_1-1)(k_1-2-\gamma) D_{w_1}^{k_1-2} \Aone = 0\,.
	\]
	Taking the derivative $\Dtwo^{k_2} \Dthree^{k_3}$, the above relation becomes 
	\begin{align*}
	& |w-v|^2 D_w^k \Aone  + (w_1-v_1)[2(k_1-1)-\gamma] D_w^{k-e_1} \Aone + (k_1-1)(k_1-2-\gamma) D_w^{k-2e_1} A_{11} 
	\\ & \qquad +  2(w_2-v_2) k_2 D_w^{k-e_2} \Aone + k_2(k_2-1) D_w^{k-2e_2} \Aone 
	\\ & \qquad +  2(w_3-v_3) k_2 D_w^{k-e_3} \Aone + k_3(k_3-1) D_w^{k-2e_3} \Aone =0\,,
	\end{align*}
	where $e_1$, $e_2$ and $e_3$ are unit vectors in $\RR^3$. Dividing by $k!$ of the above equation, we have the following recursive relation for $a_{11}$:
	\begin{align*}
	& |w-v|^2 \aone^k  + 2(w_1 - v_1) \aone^{k-e_1} - \frac{2+\gamma}{k_1} (w_1-v_1) \aone^{k-e_1} + \left(1 - \frac{2+\gamma}{k_1} \right )\aone^{k-2e_1} \nonumber
	\\ & \qquad  + [2(w_2 - v_2) +1] \aone^{k-e_2} + [2(w_3-v_3)+1] \aone^{k-2e_3}=0, \quad k_1 \neq 0\,.
	\end{align*}
	Note the above relation is valid when $k_1 \neq 0$, and one needs to compute the case with $k_1=0$ separately. This can be done similarly by taking $\Dtwo^{k_2} \Dthree^{k_3}$ derivatives of \eqref{922} directly. 
	Likewise, we have for $a_{22}^k := \frac{1}{k!} D_w^k A_{22} (v-w)$:
	\begin{align*}
	& |w-v|^2 \atwo^k  + 2(w_2 - v_2) \atwo^{k-e_2} - \frac{2+\gamma}{k_2} (w_2-v_2) \aone^{k-e_2} + \left(1 - \frac{2+\gamma}{k_2} \right )\atwo^{k-2e_2} \nonumber
	\\ & \qquad  + [2(w_1 - v_1) +1] \atwo^{k-e_1} + [2(w_3-v_3)+1] \atwo^{k-2e_3}=0, \quad k_2 \neq 0\,.
	\end{align*}
	For $a_{33}^k := \frac{1}{k!} D_w^k A_{33} (v-w)$:
	\begin{align*}
	& |w-v|^2 \athree^k  + 2(w_3 - v_3) \athree^{k-e_3} - \frac{2+\gamma}{k_3} (w_3-v_3) \aone^{k-e_3} + \left(1 - \frac{2+\gamma}{k_3} \right )\athree^{k-2e_3} \nonumber
	\\ & \qquad  + [2(w_1 - v_1) +1] \athree^{k-e_1} + [2(w_2-v_2)+1] \athree^{k-2e_2}=0, \quad k_3 \neq 0\,.
	\end{align*} 
	For $a_{12}^k := \frac{1}{k!} D_w^k A_{12} (v-w)$:
	\begin{align*}
	& |w-v|^2 \aot^k  + 2(w_1 - v_1) \aot^{k-e_1} + \aot^{k-2e_1} + 2(w_2-v_2) \aot^{k-e_2} + \aot^{k-2e_2} \nonumber
	\\ & \qquad + \left(2- \frac{2+\gamma}{k_3}  \right)(w_3-v_3) \aot^{k-e_3} + \left( 1-\frac{2+\gamma}{k_3}\right) \aot^{k-2e_3} =0, \quad k_3 \neq 0\,.
	\end{align*} 
	For $a_{13}^k := \frac{1}{k!} D_w^k A_{13} (v-w)$:
	\begin{align*}
	& |w-v|^2 \aott^k  + 2(w_1 - v_1) \aott^{k-e_1} + \aott^{k-2e_1} + 2(w_3-v_3) \aott^{k-e_3} + \aott^{k-2e_3} \nonumber
	\\ & \qquad + \left(2- \frac{2+\gamma}{k_2}  \right)(w_2-v_2) \aott^{k-e_2} + \left( 1-\frac{2+\gamma}{k_2}\right) \aott^{k-2e_2} =0, \quad k_2 \neq 0\,.
	\end{align*} 
	For $a_{23}^k := \frac{1}{k!} D_w^k A_{23} (v-w)$:
	\begin{align*}
	& |w-v|^2 \attt^k  + 2(w_2 - v_2) \attt^{k-e_2} + \attt^{k-2e_2} + 2(w_3-v_3) \attt^{k-e_3} + \attt^{k-2e_3} \nonumber
	\\ & \qquad + \left(2- \frac{2+\gamma}{k_1}  \right)(w_1-v_1) \attt^{k-e_1} + \left( 1-\frac{2+\gamma}{k_1}\right) \attt^{k-2e_1} =0, \quad k_1 \neq 0\,.
	\end{align*}


	\section*{Acknowledgements}
	JAC, JH and LW would like to thank the American Institute of Mathematics for their support through a SQuaREs project where this work was finished. This research generated from the AIM workshop ``Nonlocal differential equations in collective behavior'' in June 2018.
	JH would like to thank Tong Ding for testing parameters in an undergraduate research project related to the current work. LW would like to thank Prof. Robert Krasny for fruitful discussion on treecode and Dr. Evan Bollig on the help with Minnesota super computers. 
	JAC was partially supported by EPSRC grant number EP/P031587/1 and the Advanced Grant Nonlocal-CPD (Nonlocal PDEs for Complex Particle Dynamics: 
	Phase Transitions, Patterns and Synchronization) of the European Research Council Executive Agency (ERC) under the European Union's Horizon 2020 research and innovation programme (grant agreement No. 883363). JH was partially supported by NSF DMS-1620250 and NSF CAREER grant DMS-1654152.
	LW was partially supported by NSF DMS-1903420 and NSF CAREER grant DMS-1846854. JW was funded by the President's PhD Scholarship program of Imperial College London.
	
	\bibliographystyle{abbrv}
	\bibliography{landau}

\end{document}